\documentclass[12pt,a4paper]{article}

\usepackage[latin1]{inputenc}
\usepackage[english]{babel}
\parindent0cm
\addtolength{\topmargin}{-1.7cm}
\addtolength{\parskip}{0.2cm}
\textheight24cm
\addtolength{\textwidth}{1cm}
\addtolength{\oddsidemargin}{-0.5cm}

\usepackage[usenames, dvipsnames]{xcolor}
\usepackage{amssymb}
\usepackage{amsfonts}
\usepackage{amsmath}
\usepackage{amsthm}
\usepackage{epsfig}
\usepackage{graphics}
\usepackage{dsfont}

\usepackage{verbatim}

\usepackage{latexsym}

\textwidth16cm
\textheight23cm
\topmargin-1cm
\oddsidemargin0cm
\evensidemargin0cm
\frenchspacing

\usepackage{amsthm}
\usepackage{cleveref}
\usepackage{mathtools}
\usepackage{bbm}
\usepackage[shortlabels]{enumitem}

%MACROS!!!

\newcommand{\brc}[1]{\left(#1\right)}

\newcommand{\folge}[3][\N]{\left({#2}_{#3}\right)_{{#3}\in {#1}}}
\newcommand{\subfolge}[4][\N]{\left({#2}_{{#3}_{#4}}\right)_{{#4}\in {#1}}}
\newcommand{\subsubfolge}[5][\N]{\left({#2}_{{#3}_{{#4}_{#5}}}\right)_{{#5}\in {#1}}}
\newcommand{\norm}[1]{\left\Vert #1 \right\Vert}

\newcommand{\QED}{\begin{flushright}$\qed$\end{flushright}}

\newcommand{\mc}[1]{\mathcal{#1}}

\newcommand{\toinf}{\rightarrow\infty}

\newcommand{\xr}[2][]{\xlongrightarrow[#1]{#2}}
\newcommand{\xR}[2][]{\xRightarrow[#1]{#2}}
\newcommand{\xRa}{\xR{alm}}

\newcommand{\inv}{^\leftarrow}

\newcommand{\intr}{\int\limits_\R}
\newcommand{\intl}[1][\R]{\int\limits_{#1}}

\newcommand {\Q}	{\mathbb{Q}}
\newcommand {\R}	{\mathbb{R}}
\newcommand {\N}	{\mathbb{N}}

\newcommand{\Ra}	{\Rightarrow}

\newcommand{\ra}{\rightarrow}

 %leider gibt keine Zahlen bei \mathbb

\newcommand{\abs}[1]{\left| #1 \right|}

\renewcommand{\inv}{^{-1}}

\newcommand{\supp}{\text{supp}}

\renewcommand{\emph}[1]{\textit{#1}}
\newcommand{\prooof}{\textit{Proof. }}

\newcommand{\hoch}[1]{^{\left(#1\right)}}
\newcommand{\V}{\mathcal{V}}
\newcommand{\W}{\mathcal{W}}

\newcommand{\ind}{\mathbbm{1}}

\long\def\/*#1*/{}
 
 \newcommand{\ntoinf}[1][n]{,\quad #1\toinf}

\newcommand{\dis}[1]{F\hoch{#1}}

 \newcommand{\vag}{{loose }}
 \newcommand{\vagg}{{loose}}
 \newcommand{\vagly}{{loosely }}
 \newcommand{\cv}{\xr{l}}

\theoremstyle{definition}
\newtheorem{definition}{Definition}[section]
\crefname{definition}{Definition}{Definitions}
\newtheorem{example}[definition]{Example}
\crefname{example}{Example}{Examples}

\theoremstyle{plain}
\newtheorem{theorem}[definition]{Theorem}
\crefname{theorem}{Theorem}{Theorems}
\newtheorem{corollary}[definition]{Corollary} 
\crefname{corollary}{Corollary}{Corollaries}
\newtheorem{lemma}[definition]{Lemma}
\crefname{lemma}{Lemma}{Lemmata}

\crefname{proposition}{Proposition}{Propositions}

\theoremstyle{remark}
\newtheorem{remark}[definition]{Remark}
\crefname{remark}{Remark}{Remarks}

\crefname{question}{Question}{Questions}

\begin{document}
\renewcommand{\bibname}{References}
\begin{center}
    
\begin{huge}
    Vague and basic convergence of signed measures
\end{huge}
\vspace{0.2cm}
%\hline
\\Michael Stan\v ek\footnote[1]{Institute of Mathematical Finance, Ulm University, Ulm, Germany\\
michael.stanek@uni-ulm.de}\\
\vspace{0.2cm}
%\rule{\textwidth}{0.4pt}
\vspace{0.5cm}
\end{center}
%% Abstract
\begin{center}
    \textbf{Abstract}\\
\end{center}
We study the relationship between different kinds of convergence of finite signed measures and discuss their metrizability. In particular, we study the concept of basic convergence recently introduced by Khartov \cite{khartov2022weak} and introduce the related concept of almost basic convergence. We discover that a sequence of finite signed measures converges vaguely if and only if it is locally uniformly bounded in variation and the corresponding sequence of distribution functions either converges in Lebesgue measure up to constants, converges basically, or converges almost basically.

\section{Introduction}
\label{c:intro}

Consider finite signed Borel measures $\mu_n,n\in\N,$ and $\mu$ on $\R$. Then the sequence $\folge{\mu}{n}$ is commonly said to converge \emph{weakly} to $\mu$, if
\begin{equation}
\label{eq:intro}
\intr fd\mu_n\ra\intr fd\mu    
\end{equation}
holds for all continuous, bounded $f:\R\ra\R$.\\
A weaker form of this convergence is \emph{vague} convergence. Here, we say that $\mu_n$ converges to $\mu$ vaguely, if \cref{eq:intro} holds for all continuous $f$ with compact support. Some sources define vague convergence using continuous functions vanishing at infinity instead. In this paper, we call this convergence \emph{\vag}convergence instead to distinguish it from vague convergence. \\

Another possibility to define convergence of signed measures is through convergence (in some sense) of their distribution functions, defined by $\dis{\mu_n}(x)\coloneqq\mu_n((-\infty,x])$. Here, we will consider two kinds of such defined convergences: basic convergence and almost basic convergence. Basic convergence has been defined by Khartov in \cite{khartov2022weak} to study convergence of quasi-infinitely divisible distributions. Following his definition, the sequence $\folge{\mu}{n}$ is said to converge basically to $\mu$ if each subsequence $\subfolge{\mu}{n}{k}$ contains a further subsequence $\subsubfolge{\mu}{n}{k}{l}$ such that $\dis{\mu_{n_{k_l}}}(x)-\dis{\mu_{n_{k_l}}}(y)$ converges to $\dis\mu(x)-\dis\mu(y)$ for all $x,y\in\R\setminus S$, where $S$ is an at most countable set and may depend on the subsequence. Note that this is equivalent to $\mu_{n_{k_l}}((x,y])$ converging to $\mu((x,y])$ for all except for countable many $x,y\in\R$ with $x\leq y$. \\
This definition can be slightly relaxed by demanding the pointwise convergence of differences to hold not for all except at most countable many $x,y$, but only for almost all $x,y$ with respect to the Lebesgue measure. We call this relaxation \emph{almost basic} convergence.\\
Clearly, weak convergence implies \vag convergence, which itself implies vague convergence, and basic convergence implies almost basic convergence. As we will see, there are also relationships between vague convergence and (almost) basic convergence and between \vag convergence and (almost) basic convergence.\\
In Chapter 2, we define vague, {\vagg}, basic and almost basic convergence and give some examples of basically convergent sequences, which illustrate why details in the definition cannot be omitted. We will then further study the relationship between these four kinds of convergence and the question of their metrizability and find a characterization of basic and almost basic convergence in Chapter 3. As we shall see, almost basic convergence is metrizable, while vague, \vag and basic convergence are not. The main result is \cref{theo:micha_helly}, where we will see that a sequence of finite signed measures is vaguely convergent if and only if it is locally uniformly bounded in variation and either basically or almost basically convergent.\\
These results can be seen as a more general version of the classical Helly-Bray Theorem for finite non-negative measures, giving not only sufficient, but also necessary conditions for vague convergence and considering signed measures. They generalize results from Herdegen et al. \cite{herdegen2022vague}, where the relationship between vague convergence of signed measures and convergence of their distribution functions has been studied as well. Since there pointwise convergence of distribution functions was considered, instead of basic convergence, conditions which are both sufficient and necessary could only be found in the case where the sequence of distribution functions satisfies the condition of having \emph{no mass} for all $x
\in\R
$. Even though this condition seems to be quite natural, we will see in \cref{ex:mass_everywhere} that there exist sequences of distributions which do not satisfy it, but still converge vaguely - hence a more general statement which can also be applied to those sequences is of interest.\\
Moreover, \cref{theo:micha_helly} puts basic convergence into relation to vague convergence and hence gives further insight into convergence properties of quasi-infinitely distributions. Specifically, we see that since local boundedness in variation at least of the negative parts was assumed in all theorems in Khartov \cite{khartov2022weak} which involve basic convergence, basic convergence can there be replaced by vague convergence. When boundedness in variation is assumed instead of local boundedness in variation or boundedness in variation only of the negative or positive parts, basic convergence can even be replaced by the stronger condition of \vag convergence.\\
\cref{theo:micha_helly} implies in particular that a sequence of finite signed measures $\folge{\mu}{n}$ converges vaguely to some finite signed measure $\mu$ if and only if $\folge{\mu}{n}$ is locally uniformly bounded in variation and there exists a sequence $\folge{c}{n}$ of real numbers such that $\dis{\mu_n}+c_n$ converges to $\dis\mu$ in Lebesgue measure.

\section{Definitions and Examples}
\label{c:conv}

\subsection*{Notation}
We denote $\N=\{1,2,\ldots\}$ and $\N_0=\{0,1,2,\ldots\}$. With (signed) measures, we always mean (signed) Borel measures on $\R$. We denote the Lebesgue measure by $\lambda$ and, for $x\in\R$, the Dirac measure at $x$ by $\delta_x$. For a finite signed measure $\mu$, we denote its positive and negative part according to the Hahn-Jordan decomposition by $\mu^+$ and $\mu^-$ and by $\abs{\mu}\coloneqq\mu^++\mu^-$ its total variation. We denote the set of real-valued bounded continuous functions on $\R$ by $C_b(\R)$, the set of real-valued continuous functions with compact support on $\R$ by $C_c(\R)$ and the set of real-valued continuous functions on $\R$ which vanish at infinity by $C_0(\R)$. By $C^1(\R)$ we denote functions which are at least once continuously differentiable, and write $C_c^1(\R)=C_c(\R)\cap C^1(\R)$. We denote the supremum norm for functions by $\norm{\cdot}_\infty$. For any set $A$, we denote by $\ind_A$ the function for which $f(x)=1$ if $x\in A$ and $f(x)=0$ otherwise. By $\V$ we denote the set of right-continuous functions of bounded variation mapping from $\R$ to $\R$, by $\W$ the subspace of all $f\in\V$ with $\lim_{x\ra-\infty}f(x)=0$. Here, bounded variation means that $\lim_{n\toinf}TV_{[-n,n]}f<\infty$, where $TV_{[a,b]}$ denotes the total variation of $f$ on $[a,b]$. For a finite signed measure $\mu$, we define its distribution function $\dis\mu:\R\ra\R$ by
$$\dis\mu(x)=\mu((-\infty,x]).$$
Then $\dis\mu\in\W$. Moreover, it is well known that we  have a one-to-one correspondence between finite signed measures and functions in $\W$.

\subsection*{Definitions of convergence types}

Commonly, convergence of a sequence of finite signed measures $\folge{\mu}{n}$ to a finite signed measure $\mu$ is defined by convergence
\begin{equation}
    \label{eq:int_conv}
    \lim_{n\toinf}\intr fd\mu_n=\intr fd\mu
\end{equation}
for all functions $f$ in a suitable class of functions.\\ 
\begin{definition}   
We say, that $\mu_n$ converges to $\mu$
\begin{enumerate}
    \item \emph{vaguely}, and write $\mu_n\xr{v}\mu$, if \cref{eq:int_conv} holds for all $f\in C_c(\R)$,
    \item \emph{\vagly}, and write $\mu_n\cv\mu$, if \cref{eq:int_conv} holds for all $f\in C_0(\R)$,
    \item \emph{weakly}, and write $\mu_n\xr{w}\mu$, if \cref{eq:int_conv} holds for all $f\in C_b(\R)$.
\end{enumerate}
\end{definition}

Here, we follow the most common definition of vague convergence in the literature, which is used for example in Herdegen et al. \cite{herdegen2022vague}, Bauer \cite[30.1]{bauer1992mass}, Berg et al. \cite[Chapter 2, \S4]{berg1984harmonic}, Dieudonn\'e \cite[7, Section XIII.4]{dieudonne1970treatise},  Kallenberg \cite[Chapter 5, p. 98]{kallenberg1997foundations} or Klenke \cite[13.12]{klenke2008probability}. Sometimes vague convergence is defined using functions which vanish at infinity instead of functions with compact support. While these definitions coincide for the class of probability measures, they are in general not equivalent for signed measures.\\
Note, that weak, vague and \vag convergence can be interpreted as weak-* convergence with respect to $C_b, C_c$ respectively $C_0$ by identifying a function $f$ with the functional $\mu\mapsto\intr fd\mu$.\\

Alternatively, one can consider convergence (in some way) of the distribution functions of $\mu_n, \mu$ instead. One such kind convergence has been defined by A. Khartov in \cite{khartov2022weak} to study convergence of quasi-infinitely divisible distributions.
\begin{definition}\cite{khartov2022weak}
\label{def:bas_conv}
    Let $\folge{f}{n},f$ be functions mapping from $\R$ to $\R$. We say that $\folge{f}{n}$  \emph{converges basically} to $f$ and write $f_n\Ra f$, if every subsequence $\subfolge{f}{n}{k}$ contains a further subsequence $\subsubfolge{f}{n}{k}{l}$ such that
    \begin{equation}
    \label{eq:conv}
    f_{n_{k_l}}(x_1)-f_{n_{k_l}}(x_2)\ra f(x_1)-f(x_2),\quad l\toinf     
    \end{equation}
    for all $x_1, x_2\in\R\setminus S$, where $S$ is an at most countable set and may depend on the subsequence.\\
    For signed measures $\mu$ and $\mu_n,n\in\N$, we say that $\mu_n$ \emph{converges basically} to $\mu$ for $n\toinf$ and write $\mu_n\Ra\mu\ntoinf$, if the corresponding distribution functions converge basically, i.e. if $\dis{\mu_n}\Ra\dis\mu$ for $n \toinf$, i.e. if for each subsequence there exists a further subsequence such that $\mu_{n_{k_l}}((x_1,x_2])\ra\mu((x_1,x_2])$ for $n \toinf$ for all $x_1,x_2\in\R\setminus S$ with $x_1\leq x_2$.
\end{definition}

Finally, we will also work with a slight relaxation of basic convergence, where we only demand \cref{eq:conv} to hold almost everywhere instead of everywhere except for an at most countable set.

\begin{definition}
\label{def:almost_bas_conv}
    Let $\folge{f}{n},f$ be functions mapping from $\R$ to $\R$. We say that $\folge{f}{n}$  \emph{converges almost basically} to $f$ and write $f_n\xRa f$, if every subsequence $\subfolge{f}{n}{k}$ contains a further subsequence $\subsubfolge{f}{n}{k}{l}$ such that \cref{eq:conv} holds for all $x_1, x_2\in\R\setminus S$ where $S$ is a set of Lebesgue measure zero that may depend on the subsequence.\\
    For signed measures $\mu$ and $\mu_n,n\in\N$, we say that $\mu_n$ \emph{converges almost basically} to $\mu$ for $n\toinf$ and write $\mu_n\xRa\mu\ntoinf$, if the corresponding distribution functions converge almost basically, i.e. $\dis{\mu_n}\xRa\dis\mu$ for $n\toinf$, i.e. if for each subsequence there exists a further subsequence such that $\mu_{n_{k_l}}((x_1,x_2])\ra\mu((x_1,x_2])$ for $n\toinf$ for $\lambda$-almost all $x_1,x_2\in\R$ with $x_1\leq x_2$.
\end{definition}

\begin{remark}
\label{rem:bas_lim_unique_rem}
    \begin{enumerate}[label=(\alph*)]
        \item Basic convergence clearly implies almost basic convergence. Conversely, using indicator functions over sets of the form $$A_n=\bigcap_{m=1}^n\bigcup_{k=0}^{3^{m-1}-1}\bigg[\frac{3k}{3^m},\frac{3k+1}{3^m}\bigg)\cup \bigg[\frac{3k+2}{3^m},\frac{3k+3}{3^m}\bigg),$$ whose intersection differs from the Cantor set just by countable many points, we can construct a sequence of right-continuous functions of bounded variation which converges to $1$ on an uncountable set of measure zero and to $0$ everywhelse else. This sequence then converges to zero almost basically, but not basically. Hence almost basic convergence is in general indeed a weaker condition than basic convergence.
        \item \label{rem:bas_lim_unique} Clearly, the basic, respectively almost basic limit is unique almost everywhere, up to a constant. Since in $\V$ and $\W$ we work with right-continuous functions, the limit in $\W$ is unique, while the limit in $\V$ is unique up to a constant.
    \end{enumerate}
\end{remark}

\subsection*{Examples}
In the following, we give a few examples of basically convergent sequences that illustrate that details in the definition cannot be omitted.
The first example illustrates that it is indeed necessary to consider the differences $f_n(x)-f_n(y)$ instead of $f_n(x)$:
\begin{example}
\label{ex:bas_unb}
    Let $\mu_n=n\cdot\delta_{-n}$ for all $n\in\N$ and $\mu\equiv 0$. Then $\dis{\mu_n}(x)=
        0$ if $x<-n$ and $\dis{\mu_n}(x)=n$ elsewise. We see that $\dis{\mu_n}(x)$ does not converge for any $x\in\R$. However, for all $x,y\in\R$ and all $n\geq\max(\abs{x},\abs{y})$ we have $\dis{\mu_n}(x)-\dis{\mu_n}(y)=0$, hence $\mu_n\Ra\mu$. Note that also $\mu_n\xr{v}\mu$, since for all $r>0$ we have $\abs{\mu_n}([-r,r])=0$ for all $n\geq r$. Crucially, $\folge{\mu}{n}$ is uniformly bounded in variation on bounded intervals in this case, even though it is not bounded in variation on $\R$.
\end{example}

In general, it is necessary to consider subsequences, and the at most countable set $S$, for which the convergence in \cref{eq:conv} does not hold, may depend on the choice of the particular subsequence. This becomes obvious in the following example, the sometimes so-called \emph{typewriter sequence}, as found, inter alia, in Khartov \cite{khartov2022weak} and Bogachev \cite[Example 1.4.8]{bogachev2018weak}:

\begin{example}
\label{ex:no_mass_vague}
For all $n\in\N$, let $k_n\in\N_0$ be such that $2^{k_n}\leq n<2^{k_n+1}$. Define the sequence $\folge{\mu}{n}$ of signed measures  by
$$\mu_n=\brc{\delta_{a_n}-\delta_{b_n}},\quad \text{ where } a_n={\frac{n-2^{k_n}}{2^{k_n}}}\text{ and } b_n=\frac{n+1-2^{k_n}}{2^{k_n}}.$$
Then $\dis{\mu_n}=\ind_{[a_n,b_n)}$. The interval $[a_n,b_n)$ is shifting over $[0,1)$ and vanishes for $n\toinf$, since $b_n-a_n=\frac{1}{2^{k_n}}\ra0$ as $n\toinf$. Hence, for any $x,y\in[0,1)$ with $x\neq y$, for infinitely many $n\in\N$ we have $\dis{\mu_n}(x)=\dis{\mu_n}(y)=0$, but also for infinitely many $n\in\N$ we have $\dis{\mu_n}(x)=0$ and $\dis{\mu_n}(y)=1$. Therefore, $\dis{\mu_n}(x)-\dis{\mu_n}(y)$ does not converge unless we take a subsequence.\\
Let $\mu\equiv 0$. Since functions in $C_b(\R)$ are uniformly continuous on $[0,1]$ and $b_n-a_n\ra0$ as $n\toinf$,
$$\intr fd\mu_n\ra\intr fd\mu\ntoinf$$
for all $f\in C_b(\R)$, i.e. $\mu_n\xr{w}\mu$ for $n\toinf$. This implies $\mu_n\Ra\mu$ for $n\toinf$ by Bogachev \cite[Thm. 1.4.7]{bogachev2018weak}, which we will also state below as \cref{theo:bogachev_basic} for the convenience of the reader.
\end{example}

The following example illustrates that basic convergence on its own does not imply vague convergence:
\begin{example}
\label{ex:bas_unb_komp}
    Let $\mu_n=n^2\cdot\delta_0-n^2\cdot\delta_{\frac{1}{n}}$ for all $n\in\N$ and $\mu\equiv 0$ be signed measures. Then for all $n\in\N$ we have $\dis{\mu_n}=n^2\cdot\ind_{[0,\frac{^1}{n})}$. Thus we see that for all $x\in\R\setminus \{0\}$ and all $n\geq \frac{1}{\abs{x}}$ we have $\dis{\mu_n}(x)=0$, hence $\mu_n\Ra\mu$.\\
    Note that this sequence does not converge vaguely: %Consider the function $f=\max\{0,1-\abs{1-x}\}$. Then $f\in C_c(\R)$, but
    Let $f\in C_c(\R)$ be such that $f(x)=x$ for all $x\in[0,1]$. Then
    $$\intr fd\mu_n=n$$
    for all $n\in\N$, which obviously does not converge.
\end{example}

\section{Main results}
\label{c:main}
\subsection*{Topological results}
It is well known that weak convergence of sequences is topological and arises from a non-metrizable topology, see e.g. Bogachev \cite[3.1]{bogachev2018weak}. This, however, does not directly imply non-metrizability of weak convergence, since there could exist another topology, possibly metrizable, inducing the same (sequential) convergence. We expect the following result to be known, but could not find a ready reference and hence provide a short argument, also because in \cref{cor:vague_nonmet} the same argument will be applied for vague and \vag convergence.
\begin{theorem}
\label{theo:weak_notmet}
    Weak convergence is not metrizable on the space of all finite signed measures on $\R$.
\end{theorem}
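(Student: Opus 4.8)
The plan is to argue by contradiction via a diagonal (double-array) principle: a metrizable notion of sequential convergence always permits extraction of a convergent diagonal, and I will build an array of signed measures for which this fails. So suppose there were a metric $d$ on the space of all finite signed measures with $\mu_n\xr{w}\mu$ if and only if $d(\mu_n,\mu)\ra0$. I would then exhibit a double array whose rows converge weakly to $0$ but which admits no diagonal sequence converging weakly to $0$, contradicting the metric structure.

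Concretely, set
$$\mu_{n,m}=n\left(\delta_{1/m}-\delta_0\right),\qquad n,m\in\N.$$
For fixed $n$ and any $f\in C_b(\R)$ we have $\intr f\,d\mu_{n,m}=n\big(f(1/m)-f(0)\big)\ra0$ as $m\toinf$, by continuity of $f$ at $0$; hence $\mu_{n,m}\xr{w}0$ as $m\toinf$ for each $n$, i.e. $d(\mu_{n,m},0)\ra0$. I would therefore choose a strictly increasing sequence $\folge{m}{n}$ with $d(\mu_{n,m_n},0)<1/n$, which forces $\mu_{n,m_n}\xr{w}0$.

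The crux is to produce a single $g\in C_b(\R)$ that defeats this diagonal. Given $\folge{m}{n}$, the nodes $1/m_n$ are distinct and decrease to $0$, so I can define $g$ to vanish on $(-\infty,0]$, to satisfy $g(1/m_n)=1/\sqrt n$ at each node, to interpolate linearly between consecutive nodes, and to be constant on $[1/m_1,\infty)$. Since the nodal values tend to $0$, the function $g$ is continuous (in particular at the origin) and bounded by $1$, hence $g\in C_b(\R)$. But then $\intr g\,d\mu_{n,m_n}=n\,g(1/m_n)=\sqrt n$, which does not tend to $0$, so $\mu_{n,m_n}$ does not converge weakly to $0$ — contradicting the choice of $\folge{m}{n}$. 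Thus no metric $d$ can metrize weak convergence.

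I expect the construction of the test function $g$ to be the only delicate point: the nodal values must decay to $0$ fast enough to keep $g$ bounded and continuous at $0$, yet slowly enough (here like $1/\sqrt n$, played against the weight $n$) that $\intr g\,d\mu_{n,m_n}$ diverges. It is essential that we work with signed measures of unbounded total variation, since it is precisely the diverging weights $n$ that a fixed $g$ can exploit along the diagonal. The same mechanism — now sending mass off to infinity and testing against $C_c(\R)$ or $C_0(\R)$ — will drive the corresponding non-metrizability statement for vague and \vag convergence in \cref{cor:vague_nonmet}.
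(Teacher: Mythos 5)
Your proof is correct and follows essentially the same route as the paper: both arguments run a diagonal over the double array $n(\delta_0-\delta_{1/m})$, using the metric to pick $m_n$ so that the diagonal sequence converges in $d$ but has total variation $2n\to\infty$. The only difference is the final contradiction: the paper cites the necessity of uniform boundedness in variation for weak convergence (\cref{theo:bogachev_basic}), whereas you make the argument self-contained by explicitly building the test function $g$ with $g(1/m_n)=1/\sqrt{n}$, which is a valid (and correctly verified) alternative.
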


\begin{proof}
    Assume weak convergence was metrizable. Let $d$ be a metric such that $d(\mu_n,\mu)\ra 0$ if and only if $\mu_n\xr{w}\mu$. For all $m\in\N$, we have $m(\delta_0-\delta_{\frac{1}{n}})\xr{w}0$ as $n\toinf$. That means that for all $m\in\N$ there exists an $N_m>0$ such that $d(m(\delta_0-\delta_{\frac{1}{n}}),0)\leq\frac{1}{m}$ for all $n\geq N_m
$. Consider the sequence $\folge{\mu}{m}$ defined by $\mu_m\coloneqq m(\delta_0-\delta_{\frac{1}{N_m}})$. Then $d(\mu_m,0)\leq\frac{1}{m}$ for all $m\in\N$. However, $\mu_m$ cannot converge weakly to $0$ for $m\toinf$, as it is not uniformly bounded in variation, see Bogachev \cite[Thm. 1.4.7]{bogachev2018weak}, stated in \cref{theo:bogachev_basic}.
\end{proof}

One big difference between basic and almost basic convergence is that almost basic convergence is metrizable, while basic convergence is not.

\begin{lemma}
\label{lemma:basic_not_met}
  Basic convergence is not metrizable on $\W$.\\
\end{lemma}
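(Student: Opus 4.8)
The plan is to reduce the statement to a failure of first countability. Since metrizability implies first countability, and since \cref{rem:bas_lim_unique_rem} guarantees that basic limits in $\W$ are unique, it suffices to exhibit an intrinsic obstruction to the existence of a countable neighbourhood base at $0$. First I would explain why the strategy of \cref{theo:weak_notmet} cannot simply be repeated: there one assumes a metric $d$, takes columns $\mu^m_n$ with $\mu^m_n\to 0$ for each $m$, selects a diagonal with $d(\mu^m_{N_m},0)\le 1/m$, and derives a contradiction because the diagonal is not uniformly bounded in variation. For basic convergence no such obstruction is available: \cref{ex:bas_unb_komp} shows that a basically null sequence may have arbitrarily large local variation, and more generally basic convergence possesses no necessary boundedness condition that could be violated along a diagonal. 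Consequently any diagonal that a hypothetical metric selects satisfies $d(\cdot,0)\to 0$ and is therefore automatically basically null, so this route yields no contradiction and the argument must be intrinsic.

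The obstruction I would use comes from the Cantor construction of \cref{rem:bas_lim_unique_rem}. It provides functions $\ind_{A_n}\in\W$ converging almost basically but not basically to $0$; in fact no subsequence of $(\ind_{A_n})_n$ is basically convergent, since the only admissible pointwise limit of the differences is governed by $\ind_C$, whose support $C$ is uncountable and which is not of bounded variation. Such sequences approach $0$ only in the weaker, measure-theoretic sense. I would then characterise basic neighbourhoods of $0$ intrinsically: a set $U\ni 0$ is a neighbourhood precisely when every basically null sequence is eventually in $U$, equivalently when its complement contains no sequence of distinct points converging basically to $0$. To show that no countable family $(U_k)$ of such neighbourhoods forms a base, I would, given $(U_k)$, construct a sequence $(h_k)$ that lies in the required tails yet fails to be basically null, by planting into $h_k$ a copy of the $k$-th Cantor layer so that the $h_k$ accumulate mass on an uncountable, Cantor-type exceptional set; this certifies that $(U_k)$ is not a base and hence that basic convergence is not first countable at $0$.

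The hard part is the tension between the two demands placed on this construction: each admissible term must be basically null, which forces its mass to resolve onto an at most countable set, while the assembled diagonal must fail to be basically null, which requires mass to persist on an uncountable set. Reconciling these rests exactly on the gap between ``exceptional set at most countable'' and ``exceptional set Lebesgue null'': the resolution scales and the exceptional sets depend on the term index and cannot be captured uniformly by countably many neighbourhoods. This non-uniformity is the direct analogue of the reason almost everywhere convergence is not topological, and it is precisely what distinguishes basic convergence from almost basic convergence, whose Lebesgue-null exceptional sets can be detected by an integral metric. The delicate step, which I expect to require the full strength of the Cantor construction, is to verify that the planted masses cannot be simultaneously dissolved, i.e.\ that every diagonal selection retains an uncountable exceptional set and therefore genuinely fails basic convergence.
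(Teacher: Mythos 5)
Your overall strategy is, at bottom, the same as the paper's: exploit an adaptive Cantor-type construction to manufacture a sequence that a hypothetical metric (equivalently, a countable neighbourhood base at $0$) is forced to declare null, but whose exceptional set is uncountable rather than countable. However, there are two genuine problems. First, your opening diagnosis is wrong: you dismiss the ``diagonal against the metric'' route on the grounds that basic convergence has no necessary boundedness condition to violate, and conclude that any diagonal the metric selects ``is therefore automatically basically null.'' That is a non sequitur, and the paper's proof is precisely such a diagonal argument. One assumes a metric $d$ inducing basic convergence and uses it to select, stage by stage, indicator functions $f_n$ of $2^n$ nested half-open intervals with $d(f_n,0)<1/n$; the contradiction is supplied not by unboundedness but by the fact that the selected sequence converges pointwise to $\ind_C$ for a set $C$ that is perfect (hence uncountable) and Lebesgue-null, which is incompatible with $f_n\Ra 0$. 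So the route you reject is exactly the one that works.

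Second, the heart of the proof is exactly the step you defer: why can each $h_k$ be planted inside the prescribed neighbourhoods while the limit set is forced to be uncountable? The mechanism is that, at each stage, the functions obtained by shrinking the current $2^n$ intervals toward the $2^{n+1}$ endpoints of the previous stage's intervals converge to $0$ pointwise off a \emph{finite} set, hence basically, hence must eventually be $d$-small (or eventually lie in any given neighbourhood of $0$). This is what legitimises the adaptive selection, and without stating it your claim that the $h_k$ ``lie in the required tails'' is unsupported; note in particular that the fixed sets $A_n$ of \cref{rem:bas_lim_unique_rem} cannot be used as-is, since the construction must adapt to the given metric. Finally, your ``delicate step'' --- that every subsequence retains an uncountable exceptional set --- also needs an actual argument: in the paper it follows because the \emph{whole} sequence converges pointwise to $\ind_C$, so every subsequence has the same pointwise limit, and $C$ is shown uncountable by comparing it with the corresponding intersection of closed intervals, which is a nonempty compact perfect set. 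As it stands your proposal correctly identifies the obstruction (the gap between countable and Lebesgue-null exceptional sets) but does not carry out the construction that realises it.
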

\prooof
Assume basic convergence was metrizable on $\W$. Let $d$ be a metric on $\W$ such that $d(f_n,f)\ra0$, $n\toinf$, if and only if $f_n\Ra f$.
  \/*It follows directly from the definition of basic convergence that addition and scalar multiplication are continuous operations with relation to basic convergence, hence $\V$ with basic convergence is a metrizable topological vector space. This implies that there exists a translation-invariant metric which induces basic convergence, see \cite[\S15.11]{kothe1969topological}. Hence we can assume without loss of generality that $d$ is translation-invariant, i.e. for all $f,g,h\in\V$ we have $d(f,g)=d(f+h,g+h)$. Then $d(f+g,0)\leq d(f+g,g)+d(g,0)=d(f,0)+d(g,0)$ for all $f,g\in\V$.*/
  
  We will construct a sequence of functions such that $d(f_n,0)\ra0$ as $n\toinf$, but $f_n\not\Ra 0$ using a Cantor-like set.\\
  Let $I_0=[0,1)$ and $f_0=\ind_{I_0}$. Since $\ind_{\big[0,\frac{1}{n}\big)\cup\big[1-\frac{1}{n},1\big)}\in\W$ converges to zero pointwise on $\R\setminus\{0\}$ for $n\toinf$, it also converges basically to $0$, hence there exist $a, b\in(0,1)$ with $a<b$ such that $d(\ind_{[0,a)\cup[b,1)},0)<\frac{1}{2}$. Let $f_1=\ind_{[0,a)\cup[b,1)}$. Now, the functions $\ind_{\big[0,\frac{1}{n}\big)\cup\big[a-\frac{1}{n},a\big)\cup \big[b,b+\frac{1}{n}\big)\cup\big[1-\frac{1}{n},1\big)}\in\W$ converge basically to zero, hence there exist $a_1,a_2,b_1,b_2$ with $0<a_1<a_2<a<b<b_1<b_2<1$ such that $d(\ind_{[0,a_1)\cup[a_2,a)\cup[b,b_1)\cup[b_2,1)},0)\leq\frac{1}{3}$.
  Repeating this construction, we can inductively construct a sequence of functions $\folge{f}{n}$ and sets of disjoint half-open intervals $\{I_{n,1},\ldots,I_{n,2^n}\},\quad n\in\N$, such that for all $n\in\N$:
  \begin{itemize}
    \item $I_{n,2k-1}$ and $I_{n,2k}$ are disjoint subsets of $I_{n-1,k}$ for all $k\in\{1,\ldots,2^{n-1}\}$,
    \item $f_n=\ind_{\cup_{k=1}^{2^n}I_{n,k}}$,
    \item $d(f_n,0)<\frac{1}{n}$ and
    \item $\lim_{n\toinf}\sup_{k=1,\ldots,2^{n}}\lambda(I_{n,k})=0$
  \end{itemize}
  Then $d(f_n,0)\ra 0$ as $n\toinf$. However, the pointwise limit of $f_n$ is the function $\ind_{C}$, where $C=\bigcap_{n=1}^\infty\bigcup_{k=1}^{2^n}I_{n,k}$. $C$ is very similar to the classical Cantor set, and in particular is uncountable. This can be seen from the fact that $C$ and $\bigcap_{n=1}^\infty\bigcup_{k=1}^{2^n}\overline{I_{n,k}}$ differ only by a countable set, where $\overline{I_{n,k}}$ denotes the closure of $I_{n,k}$. 
  The set $\bigcap_{n=1}^\infty\bigcup_{k=1}^{2^n}\overline{I_{n,k}}$ is compact, non-empty and has no isolated points (i.e. is a perfect set), hence it is uncountable by Hewitt and Stromberg \cite[Theorem 6.65]{hewitt1965real}.
  Since $f_n$ converges pointwise to a function which is different from zero on an uncountable set, but equals to zero on a set of positive measure, so does each of its subsequences, hence $f_n$ cannot converge basically to zero.
  Thus basic convergence is not metrizable.\QED

\begin{remark}
    Clearly, basic convergence is also not metrizable on $\V$, since the basic limit is not unique in $\V$. This problem could be solved by considering the quotient space of $\V$ over suitable equivalence classes. However, the resulting space would then be isomorphic to $\W$, leading us back to \cref{lemma:basic_not_met}.
\end{remark}

In contrast, almost basic convergence is metrizable on $\W$. The following lemma provides an explicit metric, in a fashion similar to the Ky Fan metric $\alpha(X,Y)=\inf\{\varepsilon>0:P(\abs{X-Y}>\varepsilon\}\leq\varepsilon\}$ for random variables $X$ and $Y$, which metrizes convergence in probability, see e.g. Dudley \cite[Thm. 9.2.2]{dudley2002real}.
\begin{lemma}
\label{lemma:micha_metric}
  Almost basic convergence is metrizable on $\W$. A metric is given by %$d(f,g)=0$, if $f-g$ is constant, and $d(f,g)=$
  %$$\min\left\{1,\inf\left\{\varepsilon>0:\exists c\in\R:\lambda\left(\left\{x\in \left[-\frac{1}{\varepsilon},\frac{1}{\varepsilon}\right] : \abs{f(x)-c-g(x)}>\varepsilon\right\}\right)<\varepsilon\right\}\right\}$$
 % otherwise.\\
 $$d(f,g)=\min\left\{\varepsilon\geq0:\exists c\in\R:\lambda\left(\left\{x\in \left[-\frac{1}{\varepsilon},\frac{1}{\varepsilon}\right] : \abs{f(x)-c-g(x)}>\varepsilon\right\}\right)\leq\varepsilon\right\},$$
  %If $d(f,g)\notin\{0,1\}$, then the infimum is a minimum.\\
where we use the convention $\left[-\frac{1}{0},\frac{1}{0}\right]\coloneqq\R$.\\  
\end{lemma}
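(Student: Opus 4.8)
The plan is to verify that $d$ is a well-defined metric on $\W$ and that $d(f_n,f)\to 0$ is equivalent to $f_n\xRa f$. The conceptual key is the reformulation that $d(f_n,f)\to 0$ holds if and only if there exist constants $c_n\in\R$ with $f_n-c_n\to f$ \emph{locally in Lebesgue measure}, i.e. $\lambda(\{x\in[-R,R]:\abs{f_n(x)-c_n-f(x)}>\delta\})\to 0$ for all $R,\delta>0$. The additive constant $c$ inside the definition of $d$ is exactly what accounts for the fact that almost basic convergence only controls the differences $f_n(x_1)-f_n(x_2)$ and is thus insensitive to shifting $f_n$ by a constant; tracking this constant is the recurring subtlety throughout. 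Once the reformulation is in place, the passage to almost basic convergence is handled by the standard subsequence principle relating convergence in measure to a.e. convergence.

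First I would check that the minimum defining $d$ is attained. The defining set is nonempty since for every $\varepsilon\geq\sqrt2$ the window $[-1/\varepsilon,1/\varepsilon]$ has measure $2/\varepsilon\leq\varepsilon$, so the constraint holds for any $c$; in particular $d\leq\sqrt2$. To see the infimum $\varepsilon^\ast$ is attained, take admissible $\varepsilon_k\downarrow\varepsilon^\ast$ with witnesses $c_k$; since $f-g$ is of bounded variation, hence bounded on bounded intervals, and on a sufficiently large fixed window the ``good'' set $\{\abs{f-c_k-g}\leq\varepsilon_k\}$ has positive measure, the $c_k$ are bounded, and along a subsequence $c_k\to c^\ast$. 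Writing $A_k=\{x\in[-1/\varepsilon_k,1/\varepsilon_k]:\abs{f(x)-c_k-g(x)}>\varepsilon_k\}$ and $A^\ast$ for the analogous set at $(\varepsilon^\ast,c^\ast)$, one checks $A^\ast\subseteq\liminf_k A_k$ up to the (null) window boundary, so Fatou's lemma gives $\lambda(A^\ast)\leq\liminf_k\lambda(A_k)\leq\varepsilon^\ast$, i.e. $\varepsilon^\ast$ is admissible. Nonnegativity is clear and symmetry follows by replacing the witness $c$ with $-c$. For $d(f,g)=0$, admissibility of $0$ (window $\R$) forces $f=g+c$ $\lambda$-a.e.; as $f,g\in\W$ are right-continuous this upgrades to equality everywhere, and $\lim_{x\to-\infty}f=\lim_{x\to-\infty}g=0$ forces $c=0$, so $f=g$. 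For the triangle inequality, if $d(f,g)=\varepsilon_1$, $d(g,h)=\varepsilon_2$ with witnesses $c_1,c_2$, set $\varepsilon=\varepsilon_1+\varepsilon_2$ and $c=c_1+c_2$: since $[-1/\varepsilon,1/\varepsilon]$ lies in both smaller windows, off the union of the two exceptional sets we get $\abs{f-c-h}\leq\varepsilon$ by the triangle inequality for $\abs{\cdot}$, so the exceptional set for $(f,h)$ has measure at most $\varepsilon_1+\varepsilon_2=\varepsilon$ and $d(f,h)\leq\varepsilon$.

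To prove the reformulation, if $d(f_n,f)=\varepsilon_n\to0$ with witnesses $c_n$ then, for fixed $R,\delta$ and $n$ so large that $\varepsilon_n<\min(\delta,1/R)$, we have $[-R,R]\subseteq[-1/\varepsilon_n,1/\varepsilon_n]$ and the $\delta$-exceptional set on $[-R,R]$ is contained in the $\varepsilon_n$-exceptional set on the window, of measure $\leq\varepsilon_n\to0$; conversely, given such $c_n$, for each $\delta>0$ the choice $R=1/\delta$ makes $\lambda(\{x\in[-1/\delta,1/\delta]:\abs{f_n-c_n-f}>\delta\})\leq\delta$ for large $n$, so $d(f_n,f)\leq\delta$ eventually. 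For $d(f_n,f)\to 0\Rightarrow f_n\xRa f$, given any subsequence I extract, using local convergence in measure together with the subsequence principle on each $[-R,R]$ and diagonalization over $R\in\N$, a further subsequence and constants with $f_{n_{k_l}}-c_{n_{k_l}}\to f$ off a null set $N$; then the differences converge for all $x_1,x_2\in\R\setminus N$, which is almost basic convergence with $S=N$. For the converse I argue by contradiction: if $d(f_n,f)\not\to0$, choose a subsequence with $d(f_{n_k},f)\geq\delta_0>0$; almost basic convergence yields a further subsequence with $f_{n_{k_l}}(x_1)-f_{n_{k_l}}(x_2)\to f(x_1)-f(x_2)$ off a null set $S$, and fixing a single $y\in\R\setminus S$ and setting $c_l=f_{n_{k_l}}(y)-f(y)$ gives $f_{n_{k_l}}-c_l\to f$ pointwise off $S$, hence a.e., hence locally in measure (a.e. convergence implies convergence in measure on each finite-measure set $[-R,R]$). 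By the reformulation $d(f_{n_{k_l}},f)\to0$, contradicting $d(f_{n_{k_l}},f)\geq\delta_0$.

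The main obstacle is the well-definedness of $d$, namely attainment of the minimum: it requires controlling the additive constants $c_k$ via boundedness of $f-g$ on bounded intervals, and a careful simultaneous passage to the limit in $\varepsilon$ and $c$ through the set-theoretic $\liminf$ and Fatou's lemma, with attention to the strict inequality $\abs{f-c^\ast-g}>\varepsilon^\ast$ and the measure-zero boundary of the windows. Beyond that, the only genuinely delicate point is keeping the additive constant under control at every step — it is what makes $d$ symmetric, what links $d$ to the difference-based notion of almost basic convergence, and what must be chosen consistently (e.g. $c=c_1+c_2$ in the triangle inequality, and $c_l=f_{n_{k_l}}(y)-f(y)$ in the contradiction argument).
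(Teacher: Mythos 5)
Your proposal is correct and follows essentially the same route as the paper: attainment of the minimum via boundedness of the witness constants and a limit along a subsequence (your Fatou/$\liminf$ argument on the exceptional sets is the same device as the paper's set $S=\bigcup_k\bigcap_{j\geq k}\tilde S_{n_j}$), the same verification of the metric axioms with the constant $c_1+c_2$ and nested windows, and the same reformulation of $d$-convergence as local convergence in Lebesgue measure up to constants, linked to almost basic convergence by the subsequence principle and the choice $c_l=f_{n_{k_l}}(y)-f(y)$ at a fixed good point $y$.
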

\prooof 
First of all, we show that the minimum exists.
  For this, replace the minimum in the definition of $d$ by an infimum, so that $d(f,g)=\inf E$, where 
  $$E\coloneqq\left\{\varepsilon\geq 0:\exists c\in\R:\lambda\left(\left\{x\in\left[-\frac{1}{\varepsilon},\frac{1}{\varepsilon}\right]:\abs{f(x)-c-g(x)}>\varepsilon\right\}\right)\leq\varepsilon\right\}.$$
  It is clear that  $E$ is an interval of the form $(b,\infty)$ or $[b,\infty)$ for some $b\in[0,\sqrt{2})$.
  Consider first the case that $d(f,g)>0$. Let $\folge{\varepsilon}{n}$ be a sequence of real numbers such that $\varepsilon_n\ra d(f,g)$ as $n\toinf$ and $d(f,g)<\varepsilon_n<\sqrt{2}$ and
  \begin{equation}
  \label{eq:nonmet_1}
      \frac{1}{d(f,g)}-\frac{1}{\varepsilon_n}<\frac{1}{2n}
  \end{equation} for all $n\in\N$. Let $\folge{c}{n}\subset\R$ and let $\folge{S}{n}$ be a sequence of Borel sets such that $\lambda(S_n)\leq\varepsilon_n$ and $\abs{f(x)-c_n-g(x)}\leq\varepsilon_n$ for all $x\in [-1/\varepsilon_n,1/\varepsilon_n]\setminus S_n$ and $n\in\N$.\\
  Since $f,g\in\W$, the function $f-g$ is bounded, hence $\folge{c}{n}$ must be bounded as well and hence contains a subsequence $\subfolge{c}{n}{k}$ converging to some $c\in\R$.
   Now define
   %$$\tilde S_n\coloneqq\brc{[-1/d(f,g),-1/\varepsilon_n]\cup S_n \cup [1/\varepsilon_n,1/d(f,g)]}\cap\bigcap_{i=1}^{n-1}\tilde S_i$$ for all $n\in\N$.
   $$\tilde S_n\coloneqq[-1/d(f,g),-1/\varepsilon_n]\cup S_n \cup [1/\varepsilon_n,1/d(f,g)]$$ for all $n\in\N$.
   Then $\lambda(\tilde S_n)\leq\varepsilon_n+1/n$ due to \cref{eq:nonmet_1} and $\abs{f(x)-c_n-g(x)}\leq\varepsilon_n$ for all $x\in[-1/d(f,g),1/d(f,g)]\setminus \tilde{S}_n$ and all $n\in\N$.\\
   Now, let $\delta>0$ be arbitrary, but fixed. Then there exists $K_0\in\N$ such that for all $k>K_0$ we have $\abs{c_{n_k}-c}\leq\delta/2$ and $\varepsilon_{n_k}\leq d(f,g)+\delta/2$. Then due to the triangle inequality $\abs{f(x)-c-g(x)}\leq d(f,g)+\delta$ for all $k>K_0$ and $x\in [-1/d(f,g), 1/d(f,g)]\setminus \tilde{S}_{n_k}$. We define $S\coloneqq\bigcup_{k=1}^\infty\bigcap_{j=k}^\infty \tilde{S}_{n_j}$. Then $\abs{f(x)-c-g(x)}\leq d(f,g)+\delta$ for all $x\in[-1/d(f,g),1/d(f,g)]\setminus S$. Since $\delta$ was arbitrary, it follows that actually $\abs{f(x)-c-g(x)}\leq d(f,g)$ for all $x\in [-d(f,g),d(f,g)]\setminus S$. Moreover, by the continuity of measures $\lambda\brc{S}\leq d(f,g)$. Hence, the minimum is attained if $d(f,g)>0$.\\
   Now let $d(f,g)=0$. Then for all $n\in\N$ there exist a set $S_n$ of measure at most $1/n$ and a constant $c_n$ such that $\abs{f-c_n-g}\leq 1/n$ on $[-n,n]\setminus S_n$. Like above, there exists a subsequence $\subfolge{c}{n}{k}$ converging to some $c\in\R$. Let $K\in\N$ be arbitrary. Then by the triangle inequality $\abs{f(x)-c-g(x)}\leq 1/n+\abs{c_n-c}$ for all $x\in[-K,K]\setminus S_n$ and $n\geq K$. %Since for $n\toinf$ we have $\lambda([-K,K]\cap S_n)\ra 0$, $\abs{c_n-c}\ra 0$ and $1/n\ra 0$, this implies that $f=g+c$ almost everywhere in $[-K,K]$. Since $\R=\bigcup_{K=1}^\infty[-K,K]$, 
   For every $\gamma>0$ we then see 
   $$\lambda\left(\left\{x\in\left[-K,K\right]:\abs{f(x)-c-g(x)}>\gamma\right\}\right)\leq\gamma$$ by choosing $n$ such that $1/n+\abs{c_n-c}<\gamma$. This implies $f=g+c$ almost everywhere in $[-K,K]$ and hence almost everywhere in $\R$. Hence the minimum is also attained if $d(f,g)=0$.

   Clearly, $d$ is nonnegative and symmetric, and $d(f,g)=0$ if and only if $f=g+c$ almost everywhere for some $c\in\R$. Since $\lim_{x\ra-\infty}f(x)=0=\lim_{x\ra-\infty}g(x)$ by definition of $\W$, $c=0$, and as $f,g$ are continuous from the right, almost everywhere equality implies equality. Hence $d(f,g)=0$ if and only if $f=g$.\\
   Let $f,g,h\in\W$ all be different. 
   Let $d_1\coloneqq d(f,h)$ and $d_2\coloneqq d(h,g)$. Then there exist constants $c_1,c_2\in\R$ and sets $S_1,S_2\in\mc{B}(\R)$ such that $\lambda(S_i)\leq d_i$ for $i\in\{1,2\}$ and $\abs{f(x)-c_1-h(x)}\leq d_1$ for all $x\in[-1/d_1,1/d_1]\setminus S_1$ and $\abs{h(x)-c_2-g(x)}\leq d_2$ for all $x\in[-1/d_2,1/d_2]\setminus S_2$. But then, since $[-1/(d_1+d_2),1/(d_1+d_2)]\setminus (S_1\cup S_2)\subset([-1/d_1,1/d_1]\setminus S_1)\cap([-1/d_2,1/d_2]\setminus S_2)$ we have $\abs{f(x)-(c_1+c_2)-g(x)}\leq\abs{f(x)-c_1-h(x)}+\abs{h(x)-c_2-g(x)}\leq d_1+d_2$ for all $x\in[-1/(d_1+d_2),1/(d_1+d_2)]\setminus (S_1\cup S_2)$. Since $\lambda(S_1\cup S_2)\leq d_1+d_2$, we see that $d(f,g)\leq d(f,h)+d(h,g)$. Hence $d$ is indeed a metric.\\

   Let $f_n,f\in\W$ with $d(f_n,f)\ra 0$ for $n\toinf$. Then there exists a sequence $\folge{c}{n}\subset\R$ such that for all $n$ sufficiently large with $1/d(f_n,f)>N$ the equality $\abs{f_n(x)-c_n-f(x)}\leq d(f_n,f)$ holds for all $x\in[-N,N]\setminus S_n$, where $S_n$ is a set of measure at most $d(f_n,f)$. Since $N$ is arbitrary, this means that $f_{n}-c_{n}$ converges to $f$ locally in Lebesgue measure. Now let $\subfolge{f}{n}{k}$ be some subsequence of $\folge{f}{n}$. Then $f_{n_k}-c_{n_k}$ converges to $f$ locally in Lebesgue measure  as well. Therefore there exists a further subsequence $f_{n_{k_l}}-c_{k_l}$ which converges to $f$ almost everywhere, see Cohn \cite[Prop. 3.1.3]{cohn2013measure} in conjunction with a diagonal sequence argument. But then $\lim_{l\toinf}(f_{n_{k_l}}(x)-f_{n_{k_l}}(y))=\lim_{l\toinf}(f_{n_{k_l}}(x)-c_{k_l})-\lim_{l\toinf}(f_{n_{k_l}}(y)-c_{k_l})=f(x)-f(y)$ for almost all $x,y\in\R$. Hence $f_n\xRa f$.

   Conversely, assume $f_{n}\xRa f$. Then for any subsequence $\subfolge{f}{n}{k}$ there exists a further subsequence $\subsubfolge{f}{n}{k}{l}$ and a set $S$ of Lebesgue measure zero such that $f_{n_{k_l}}(x)-f_{n_{k_l}}(y)\ra f(x)-f(y)$ for all $x\in\R\setminus S$. Let $x_0\in\R\setminus S$ be fixed and let $c_n\coloneqq f_n(x_0)-f(x_0)$ for all $n\in\N$. Then $f_{n_{k_l}}(x)-c_{n_{k_l}}=f_{n_{k_l}}(x)-f_{n_{k_l}}(x_0)+f(x_0)$, which converges to $f(x)$ for all $x\in\R\setminus S$. In particular, $f_{n_{k_l}}-c_{n_{k_l}}$ converges to $f$ locally in Lebesgue measure (see e.g. \cite[Prop. 3.1.2]{cohn2013measure}), hence clearly $d(f_{n_{k_l}},f)\ra 0$. Since each subsequence contains a further subsequence which converges in the metric $d$ to $f$, the whole sequence must converge in $d$ to $f$ as well.\\
   Hence $d(f_n,f)\ra 0$ if and only if $f_n\xRa f$.
   \QED

Almost basic convergence is however not induced by a norm.
\begin{lemma}
    There exists no norm $\norm{\cdot}$ on $\W$ such that convergence in $\norm{\cdot}$ is equivalent to almost basic convergence.
\end{lemma}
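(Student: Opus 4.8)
The plan is to argue by contradiction: assume a norm $\norm{\cdot}$ on $\W$ induces almost basic convergence, so that $\norm{f_n}\ra0$ if and only if $f_n\xRa 0$. The mechanism I would exploit is the failure of local convexity, realized concretely by writing one fixed nonzero function as an average of finitely many almost basically small functions and then applying the triangle inequality and homogeneity of the norm. Fix $g\coloneqq\ind_{[0,1)}\in\W$, which is nonzero, so $\norm{g}>0$. For $m\in\N$ and $i\in\{1,\ldots,m\}$ set $h_{m,i}\coloneqq m\cdot\ind_{[(i-1)/m,\,i/m)}\in\W$. Then for every $m$ one has the exact decomposition
$$g=\ind_{[0,1)}=\frac1m\sum_{i=1}^m h_{m,i},$$
so that $\norm{g}\le\frac1m\sum_{i=1}^m\norm{h_{m,i}}\le\max_{1\le i\le m}\norm{h_{m,i}}$ by the triangle inequality and homogeneity.

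The crux is then to show $\max_{1\le i\le m}\norm{h_{m,i}}\ra0$ as $m\toinf$. To control these norms without assuming $\norm{\cdot}$ is uniformly comparable to the metric of \cref{lemma:micha_metric}, I would enumerate the doubly-indexed family $(h_{m,i})_{m\in\N,\,1\le i\le m}$ as a single sequence $(\tilde h_j)_{j\in\N}$. Running through this sequence forces the first index $m$ to infinity, and since each $\tilde h_j$ is supported on an interval whose length tends to $0$, the sequence $(\tilde h_j)$ converges to $0$ in Lebesgue measure and hence almost basically (every subsequence has a further subsequence converging $\lambda$-almost everywhere to $0$ by Cohn \cite[Prop. 3.1.3]{cohn2013measure}, whence the differences $\tilde h_j(x)-\tilde h_j(y)$ converge to $0$ almost everywhere). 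By the assumed equivalence, $\norm{\tilde h_j}\ra0$. As the functions $h_{m,i}$ with $1\le i\le m$ all lie in an ever later tail of $(\tilde h_j)$ as $m$ grows, it follows that $\max_{1\le i\le m}\norm{h_{m,i}}\ra0$, as desired.

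Combining the two estimates yields $0<\norm{g}\le\max_{1\le i\le m}\norm{h_{m,i}}\ra0$, a contradiction, which shows that no such norm can exist. I expect the main obstacle to be recognizing which obstruction to invoke: the most natural first attempt—the homogeneity fact that in a normed space $\norm{f_n}\ra0$ permits a rescaling $t_n\toinf$ with $\norm{t_nf_n}\ra0$—leads nowhere, because almost basic convergence, like convergence in measure, does admit such rescalings (one may slow down $t_n\toinf$ so that $t_n(f_n-c_n)\ra0$ locally in measure, using the characterization in the proof of \cref{lemma:micha_metric}). The decisive ingredient is instead the non-local-convexity, and the only genuine technical care required is to extract the uniform bound $\max_{1\le i\le m}\norm{h_{m,i}}\ra0$ directly from the norm, which the single-sequence enumeration accomplishes.
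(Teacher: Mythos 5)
Your proof is correct, but it takes a genuinely different route from the paper's. The paper's argument is a two-line homogeneity trick: set $f_n=\ind_{[0,1/n)}/\norm{\ind_{[0,1/n)}}$, observe that $\norm{f_n}=1$ while $f_n(x)\ra 0$ for every $x\neq 0$ (no matter how large the normalizing factor is, since the supports shrink to $\{0\}$), so $f_n\Ra 0$ and hence $f_n\xRa 0$ without $\norm{f_n}\ra 0$. In other words, there is a nonzero sequence $g_n$ with $t_ng_n\xRa 0$ for \emph{every} choice of scalars $t_n$, which is impossible in a normed space. Note that this is a different homogeneity argument from the one you dismissed: you correctly observe that the Mazur--Orlicz-type property ($\norm{f_n}\ra0$ allows a slow rescaling $t_n\toinf$ with $t_nf_n\ra0$) cannot distinguish almost basic convergence from norm convergence, but the paper's variant --- normalizing a fixed sequence of shrinking indicators --- does work and is decisive. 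Your argument is instead the classical non-local-convexity obstruction familiar from $L^0$: writing $\ind_{[0,1)}$ as the average $\frac1m\sum_{i=1}^m h_{m,i}$ of tall, narrow spikes, showing via the enumeration trick that $\max_i\norm{h_{m,i}}\ra0$, and contradicting the triangle inequality. All steps check out (the enumerated sequence does converge to $0$ globally in Lebesgue measure since $\lambda(\supp\tilde h_j)\ra0$, hence almost basically by \cref{theo:micha_equiv}, and the tail argument for the maximum is sound). What your approach buys is a slightly stronger structural conclusion --- it rules out any locally convex metrizable linear topology inducing almost basic convergence, not just a norm --- at the cost of being considerably longer than the paper's one-line counterexample.
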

\proof Let $\norm{\cdot}$ be a norm on $\W$. Consider the sequence $\folge{f}{n}$ in $\W$ where $$f_n=\frac{\ind_{[0,1/n)}}{\norm{\ind_{[0,1/n)}}}.$$
Then $\norm{f_n}=1$ for all $n\in\N$, but $f_n(x)=0$ for all $x\in\R\setminus[0,1/n]$ and $n\in\N$. Hence $f_n\Ra 0$ as $n\toinf$, since $f_n(x)\ra 0$ as $n\toinf$ for all $x\in\R\setminus\{0\}$.\qed

\begin{lemma}
    The metric space $(\W,d)$ with $d$ defined as in \cref{lemma:micha_metric} is not complete.
\end{lemma}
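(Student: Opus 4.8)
The plan is to exhibit a $d$-Cauchy sequence in $\W$ whose natural limit, taken locally in Lebesgue measure up to a constant, is a bounded measurable function admitting no representative of bounded variation, and therefore no representative in $\W$; by \cref{lemma:micha_metric} the sequence then cannot converge in $(\W,d)$. Concretely, I would take the bounded measurable function $f\colon\R\ra\R$ with $f(x)=\sin(1/x)$ for $x\in(0,1]$ and $f(x)=0$ otherwise. Since $f$ is bounded and supported in $[0,1]$ it lies in $L^1(\R)$, so by density of step functions in $L^1$ there is a sequence $\folge{f}{n}$ of finite real-linear combinations of indicators of bounded half-open intervals, all supported in $[0,1]$, with $\norm{f_n-f}_{L^1(\R)}\ra0$ as $n\toinf$. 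Each such $f_n$ is right-continuous, of bounded variation and vanishes at $-\infty$, hence $f_n\in\W$.

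To see that $\folge{f}{n}$ is $d$-Cauchy, I would fix $\varepsilon\in(0,1]$ and note that, since $f_m-f_n$ is supported in $[0,1]\subset[-1/\varepsilon,1/\varepsilon]$, a Chebyshev-type estimate gives $\lambda(\{x\in[-1/\varepsilon,1/\varepsilon]:\abs{f_m(x)-f_n(x)}>\varepsilon\})\le\frac{1}{\varepsilon}\norm{f_m-f_n}_{L^1(\R)}$. Choosing the constant $c=0$ in the definition of $d$ from \cref{lemma:micha_metric}, this shows $d(f_m,f_n)\le\varepsilon$ as soon as $\norm{f_m-f_n}_{L^1(\R)}\le\varepsilon^2$. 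As $\folge{f}{n}$ is Cauchy in $L^1(\R)$, letting $\varepsilon\ra0$ forces $d(f_m,f_n)\ra0$ as $m,n\toinf$.

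It remains to rule out a limit in $(\W,d)$. Suppose for contradiction that $f_n\ra g$ in $d$ for some $g\in\W$. As established in the proof of \cref{lemma:micha_metric}, $d(f_n,g)\ra0$ yields constants $\folge{\gamma}{n}\subset\R$ with $f_n-\gamma_n\ra g$ locally in Lebesgue measure; on the other hand $f_n\ra f$ locally in measure (with constants $0$) because $\norm{f_n-f}_{L^1(\R)}\ra0$. Subtracting, the constant sequence $\gamma_n=f_n-(f_n-\gamma_n)$ converges locally in measure to $f-g$. Since a sequence of constants that converges in measure on a set of positive measure is Cauchy in $\R$, it converges to some $\gamma\in\R$ and $f=g+\gamma$ $\lambda$-almost everywhere. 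But $g+\gamma$ is a constant shift of an element of $\W$, hence of bounded variation, and it agrees with $f$ almost everywhere, contradicting the claim that $f$ has no representative of bounded variation.

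The main obstacle is exactly that last claim, which I would prove by a right-limit argument: every function of bounded variation has a genuine right limit at $0$, so if some BV function $h$ satisfied $h=f$ a.e., then $L\coloneqq\lim_{x\ra0^+}h(x)$ would exist, forcing $f(x)\in(L-\tfrac12,L+\tfrac12)$ for almost every $x$ in some interval $(0,\delta)$. This is impossible, because $\sin(1/x)$ takes values within $\tfrac12$ of both $+1$ and $-1$ on sets of positive measure in every right neighbourhood of $0$. Hence no such $h$ exists, the Cauchy sequence $\folge{f}{n}$ has no $d$-limit in $\W$, and $(\W,d)$ is not complete.
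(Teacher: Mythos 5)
Your proof is correct, but it follows a genuinely different route from the paper's. The paper builds a Smith--Volterra--Cantor (fat Cantor) set $S$ of measure $1/2$ out of nested finite unions of half-open intervals and takes $f_n=\ind_{S_n}$; Cauchyness is an explicit measure computation ($f_n=f_m$ off a set of measure $\abs{2^{-n-1}-2^{-m-1}}$), and the obstruction to a limit in $\W$ is that any a.e.\ representative of $\ind_S$ must fail right-continuity (and have infinite variation) at every point of a set of measure $1/2$, since every right-neighbourhood of a point of $S$ meets the complement of $S$ in positive measure. You instead take the essentially oscillating function $\sin(1/x)\ind_{(0,1]}$, approximate it in $L^1$ by step functions (which lie in $\W$), get Cauchyness from a Chebyshev estimate with $c=0$, and rule out a limit via the classical fact that a function of bounded variation has a one-sided limit at every point, which $\sin(1/x)$ essentially violates at the single point $0$. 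Your identification of the would-be limit (passing from $d(f_n,g)\ra 0$ to $f_n-\gamma_n\ra g$ locally in measure, comparing with $f_n\ra f$ in $L^1$, and concluding $f=g+\gamma$ a.e.\ because a sequence of constants converging in measure on a set of positive measure converges in $\R$) is sound. Roughly, your argument localizes the failure to one point and leans on a standard property of BV functions, which makes the non-representability step shorter and more classical, at the mild cost of invoking $L^1$-density of step functions; the paper's construction is more hands-on and produces a limit that is pathological on a set of positive measure rather than at a single point. Either argument establishes the claim.
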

    
    \prooof
    Consider sets used in the construction of the Smith-Volterra-Cantor set, but with all intervals being half-open. For this, let $S_0=(0,1]$. We construct $S_1$ by removing an interval of length $4\inv$ in the middle of $S_0$, i.e. $S_1=(0,3/8]\cup(5/8,1]$.
    Then $S_1$ consists of two intervals. In the next step, an interval of the length $4^{-2}$ is removed from the middle of each of these intervals, i.e. $S_2=(0,5/32]\cup(7/32,12/32]\cup(20/32,25/32]\cup(27/32,1]$. This construction is continued inductively, i.e. for all $n\in\N$ the set $S_n$ is the union of $2^n$ half-open intervals, and the set $S_{n+1}$ arises from removing a half-open interval of length $4^{-n-1}$ from the middle of each of them.\\
    Since $\lambda(S_n\setminus S_{n+1})=2^n\cdot 4^{-n-1}=2^{-n-2}$ due to $S_n\setminus S_{n+1}$ being $2^n$ intervals of length $4^{-n-1}$, and since $S_{n+1}\subset S_n$ for all $n\in\N$, we see that the set $S\coloneqq\bigcap_{n=0}^\infty S_n$ has Lebesgue measure $1/2$. Moreover, one can see that for all $x\in S$ and $\varepsilon>0$ it holds that $\lambda([x,x+\varepsilon)\setminus S)>0$.\\
    Since for all $n\in\N$ the set $S_n$ consists of finitely many half-open intervals, the indicator function $f_n\coloneqq \ind_{S_n}$ lies in $\W$. 
    Moreover, $f_n$ converges to $g\coloneqq\ind_S$ pointwise for $n\toinf$. Since $\lim_{x\ra-\infty}g(x)=0$ we see that if $f_n\xRa f$ for some $f\in\W$, then necessarily $f=g$ almost everywhere by the definition of almost basic convergence. But for all $x\in S$ and $\varepsilon>0$ we have $g(x)=1$, and $\lambda(g^{-1}(\{0\})\cap[x,x+\varepsilon))>0$. It follows that for a set $B$ of Lebesgue measure $1/2$ also $f(x)=1$ and $\lambda(f^{-1}(\{0\})\cap[x,x+\varepsilon))>0$ for all $x\in B$ and $\varepsilon>0$. But then $f$ has infinite total variation and is not continuous from the right at any $x\in B$, a contradiction to the assumption $f\in\W$.
    However, $\folge{f}{n}$ is a Cauchy sequence with relation to $d$, since $f_n=f_m$ everywhere except for a set of Lebesgue measure $\abs{2^{-n-1}-2^{-m-1}}$ and therefore $d(f_n,f_m)\leq \abs{2^{-n-1}-2^{-m-1}}$. Hence $(\W,d)$ is not complete. \qed

\begin{lemma}
    The metric space $(\W,d)$ with $d$ defined as in \cref{lemma:micha_metric} is separable.
\end{lemma}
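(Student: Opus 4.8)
The plan is to exhibit an explicit countable dense subset. Let $D$ denote the set of all finite rational step functions
$$g=\sum_{i=1}^k q_i\,\ind_{[r_i,\infty)},\qquad k\in\N,\ q_1,\dots,q_k\in\Q,\ r_1,\dots,r_k\in\Q.$$
Each such $g$ is right-continuous, has total variation at most $\sum_{i}\abs{q_i}<\infty$, and satisfies $\lim_{x\to-\infty}g(x)=0$, so $D\subset\W$; moreover $D$ is countable. It then remains to show that $D$ is dense in $(\W,d)$.

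To this end I would fix $f\in\W$ and $\varepsilon\in(0,1)$ and set $N=1/\varepsilon$. Recalling the definition of $d$ from \cref{lemma:micha_metric}, it suffices to produce $g\in D$ with
$$\lambda\!\left(\left\{x\in[-N,N]:\abs{f(x)-g(x)}>\varepsilon\right\}\right)\le\varepsilon,$$
since then $\varepsilon$ belongs to the set $E$ appearing in the proof of \cref{lemma:micha_metric} (taking the constant $c=0$), whence $d(f,g)\le\varepsilon$. As $f\in\W$ has finite total variation it is bounded and Borel measurable on $[-N,N]$, so everything reduces to approximating a bounded measurable function, \emph{in Lebesgue measure}, on a compact interval by a member of $D$.

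For the construction I would use that $f$, being of bounded variation, possesses one-sided limits at every point and is therefore a uniform limit of real-valued step functions on $[-N,N]$: choose a step function $h$ with $\norm{f-h}_\infty\le\varepsilon/2$ on $[-N,N]$, having finitely many jumps at real points $p_1<\dots<p_m$ with real heights. Rounding the heights (and the constant level) to rationals perturbs $h$ only by a uniformly small amount, since the cumulative rounding error over the finitely many jumps can be kept below $\varepsilon/4$; replacing each $p_j$ by a nearby rational breakpoint alters the function only on the finitely many short intervals lying between $p_j$ and its rational substitute, whose total length can be made at most $\varepsilon$. Telescoping the jumps yields a rational right-continuous step function $g=\sum_i q_i\,\ind_{[r_i,\infty)}\in D$ (a far-left rational jump absorbs the constant level), and by construction $\abs{f-g}\le\varepsilon$ on $[-N,N]$ outside a set of Lebesgue measure at most $\varepsilon$, as required. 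Alternatively one may simply invoke the standard density of rational simple functions in the topology of convergence in measure on $[-N,N]$ and pass to a right-continuous representative in $D$.

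The main point to get right is that a \emph{single}, $f$-independent countable family $D\subset\W$ suffices, and that only approximation \emph{in measure}, rather than uniformly, is needed: uniform approximation by elements of $D$ is impossible when $f$ has a jump at an irrational point, but any such mismatch is confined to an arbitrarily short interval and is therefore invisible to $d$. The additive constant $c$ in the definition of $d$ further relieves us of controlling $g$ outside $[-N,N]$, so extending the step function to all of $\R$ poses no difficulty. I expect the only genuine care to lie in bookkeeping the two sources of error (rational rounding of heights versus rational shifting of breakpoints) so that one stays below $\varepsilon$ in sup-norm and the other stays below $\varepsilon$ in measure.
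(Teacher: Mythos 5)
Your proof is correct and takes essentially the same approach as the paper: both exhibit a countable family of rational step functions and show it is dense. The only (immaterial) difference is how density is verified — you approximate $f$ in Lebesgue measure on $[-1/\varepsilon,1/\varepsilon]$ using the regulated structure of BV functions and work directly with the explicit form of $d$, whereas the paper approximates $f$ pointwise via right-continuity and then invokes the equivalence of $d$-convergence with almost basic convergence.
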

    
    \proof
    Let $M=\{f:\R\ra\R: f=\sum_{i=1}^nc_i\ind_{[a_i,b_i)}, n\in\N, a_i,b_i,c_i\in\Q, a_i<b_i\}$. \\Then $M$ is a countable subset of $\W$. Let $f\in\W$. Then it is easily seen that due to the right-continuity of $f$ and density of $\Q$ in $\R$ there exists a sequence $\folge{f}{n}\subset M$ which converges to $f$ pointwise and hence also almost basically. This shows that $M$ is dense in $(\W,d)$ and hence $(\W,d)$ is separable.\/* For example, let $\varphi:\N\ra\Q$ be bijective and define $\varphi_n:\R\ra\Q$ by $$\varphi_n(x)=\begin{cases}
        0 &, x>\max_{i=1,\ldots,n}\varphi(i)\\
        \min\{\varphi(i):\varphi(i)>x, i\in\{1,\ldots,n\}\} &,\text{otherwise}
    \end{cases}$$
    for all $n\in\N$ and $x\in\R$. Then $\varphi_n(x)\searrow x$ for $n\toinf$ and $\lim_{y\searrow x}\varphi_n(y)=\varphi_n(x)$ for all $x\in\R$. For all $n\in\N$, define $f_n:\R\ra\R$ by
    $$f_n(x)=\varphi_n(f(\varphi_n(x))).$$
    Then $f_n\in M$ for all $n\in\N$, and due to the aforementioned properties of $\varphi_n$,  $f_n$ converges to $f$ pointwise for $n\toinf$.*/

\subsection*{Relationship between types of convergence}
The argumentation in the last step of the proof of \cref{lemma:micha_metric} leads to the following quite natural characterization of almost basic convergence.

\begin{theorem}
\label{theo:micha_equiv}
Let $f_n,f\in\W (n\in\N)$. Then the following are equivalent:
\begin{enumerate}[label=(\roman*)]
    \item $f_n\xRa f$ as $n\toinf$.
    \item $f_n$ converges to $f$ locally in Lebesgue measure up to constants, i.e. there exists a sequence $\folge{c}{n}$ of real numbers such that for all $\varepsilon>0, r>0$
    $$\lim_{n\toinf}\lambda\brc{\{x\in[-r,r]:\abs{f_n(x)-c_n-f(x)}>\varepsilon\}}=0.$$
    \item There exists a sequence $\folge{c}{n}$ of real numbers such that for each subsequence $\subfolge{f}{n}{k}$ of $\folge{f}{n}$ there exists a further subsequence $\subsubfolge{f}{n}{k}{l}$ and a Borel set $S$ of Lebesgue measure zero such that
    $$f_{n_{k_l}}(x)-c_{n_{k_l}}\ra f(x),\quad l\toinf$$
    for all $x\in\R\setminus S$.
    
\end{enumerate}
\end{theorem}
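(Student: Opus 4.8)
The plan is to prove the cycle of implications $(i)\Rightarrow(ii)\Rightarrow(iii)\Rightarrow(i)$, carrying the same sequence $\folge{c}{n}$ through the last two arrows. Once $\folge{c}{n}$ is fixed, the passage between $(ii)$ and $(iii)$ is nothing but the classical relationship between convergence in Lebesgue measure and almost everywhere convergence along subsequences, applied to the shifted functions $f_n-c_n$; the step $(iii)\Rightarrow(i)$ is immediate, since forming the difference $f_n(x)-f_n(y)$ annihilates the shift $c_n$; and the implication $(i)\Rightarrow(ii)$, which has to manufacture one global sequence $\folge{c}{n}$ out of the subsequence-dependent data of almost basic convergence, is the only genuinely delicate step.

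For $(ii)\Rightarrow(iii)$ I would fix the sequence $\folge{c}{n}$ furnished by $(ii)$, so that $f_n-c_n\ra f$ locally in Lebesgue measure. Then any subsequence $\subfolge{f}{n}{k}$ still satisfies $f_{n_k}-c_{n_k}\ra f$ in measure on each interval $[-r,r]$. Applying the standard fact that convergence in measure yields almost everywhere convergence along a subsequence (Cohn \cite[Prop. 3.1.3]{cohn2013measure}) on $[-1,1]$, then on $[-2,2]$, and so on, and passing to a diagonal subsequence, I obtain a further subsequence $\subsubfolge{f}{n}{k}{l}$ and a Borel null set $S$ (the union over $r\in\N$ of the exceptional sets on the intervals $[-r,r]$) with $f_{n_{k_l}}(x)-c_{n_{k_l}}\ra f(x)$ for all $x\in\R\setminus S$, which is exactly $(iii)$. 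Retaining this $\folge{c}{n}$, the subsequence $\subsubfolge{f}{n}{k}{l}$ and the set $S$, the step $(iii)\Rightarrow(i)$ is settled by observing that
$$f_{n_{k_l}}(x)-f_{n_{k_l}}(y)=\pars{f_{n_{k_l}}(x)-c_{n_{k_l}}}-\pars{f_{n_{k_l}}(y)-c_{n_{k_l}}}\ra f(x)-f(y)$$
for all $x,y\in\R\setminus S$, which is precisely the defining condition of almost basic convergence.

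The hard part is $(i)\Rightarrow(ii)$. Almost basic convergence controls only the differences $f_n(x)-f_n(y)$ and permits the exceptional null set to depend on the chosen subsequence, so the naive idea of centering via $c_n\coloneqq f_n(x_0)-f(x_0)$ at a single fixed point $x_0$ need not work: no single $x_0$ can lie outside the exceptional sets of all (uncountably many) subsequences simultaneously, and \cref{ex:bas_unb} shows that the required shifts may even be unbounded. To produce a coherent global sequence I would instead appeal to \cref{lemma:micha_metric}: since the metric $d$ constructed there metrizes almost basic convergence on $\W$, the hypothesis $f_n\xRa f$ is equivalent to $d(f_n,f)\ra0$. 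The forward direction of the proof of \cref{lemma:micha_metric} then produces, for each $n$, a constant $c_n$ attaining the minimum defining $d(f_n,f)$ together with a set of measure at most $d(f_n,f)$ outside of which $\abs{f_n-c_n-f}\leq d(f_n,f)$ holds on $[-1/d(f_n,f),1/d(f_n,f)]$, and this is exactly the assertion that $f_n-c_n\ra f$ locally in Lebesgue measure, i.e. $(ii)$. I expect this to be the main obstacle, since it is here that the infimum over constants built into $d$ does the real work of converting the subsequence-dependent centerings of almost basic convergence into one globally valid sequence $\folge{c}{n}$.
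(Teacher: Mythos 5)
Your proposal is correct and follows essentially the same route as the paper: the implication (i)$\Rightarrow$(ii) via the metric of \cref{lemma:micha_metric} and the forward direction of its proof, (ii)$\Rightarrow$(iii) via Cohn's subsequence principle for convergence in measure combined with a diagonal argument over the intervals $[-r,r]$, and (iii)$\Rightarrow$(i) by cancelling the constants in the differences. Your additional discussion of why a naive single-point centering fails is a useful elaboration but does not change the argument.
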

\prooof 
Let $f_n\xRa f$ for $n\toinf$. By \cref{lemma:micha_metric}, $d(f_nf)\ra0$ as $n\toinf$, and in the proof of \cref{lemma:micha_metric} it was shown that $f_n$ then converges to $f$ locally in Lebesgue measure up to constants so that i) implies ii). \\
ii) implies iii) by Cohn \cite[Prop. 3.1.3]{cohn2013measure} in conjunction with a diagonal sequence argument, since $f_n-c_n$ converges to $f$ locally in Lebesgue measure.\\
If iii) holds, then $f_{n_{k_l}}(x)-f_{n_{k_l}}(y)=f_{n_{k_l}}(x)-c_{n_{k_l}}-f_{n_{k_l}}(y)+c_{n_{k_l}}\ra f(x)-f(y)$ for all $x,y\in\R\setminus S$, hence $f_n\xRa f$.\\
\QED

\begin{remark}
    If $f\in\V$, then $\lim_{x\rightarrow-\infty}f(x)$ exists in $\R$, and $f-f(-\infty)\in\W$. If $\folge{f}{n}$ is a sequence in $\V$, then $f_n\xr vf$ if and only if $f_n-f_n(-\infty)\xr vf-f(-\infty)$. Hence \cref{theo:micha_equiv} continues to hold word by word for $f_n,f\in\V$.
\end{remark}

 The equivalence of i) and iii) in \cref{theo:micha_equiv} has a counterpart for basic convergence:
\begin{corollary}
\label{theo:micha_1}
    Let $\folge{f}{n},f\in\V$. Then $f_n\Ra f$ if and only if there exist constants $\folge{c}{n}$ in $\R$ such that for each subsequence $\subfolge{f}{n}{k}$ of $\folge{f}{n}$ there exists a further subsequence $\subsubfolge{f}{n}{k}{l}$ and an at most countable set $S$ such that $f_{n_{k_l}}(x)-c_{n_{k_l}}\ra f(x)$ for $l\toinf$ for all $x\in\R\setminus S$.\\
\end{corollary}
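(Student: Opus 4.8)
The equivalence mirrors that of (i) and (iii) in \cref{theo:micha_equiv}, and I would treat the two implications separately. The direction from the constant condition to $f_n\Ra f$ is immediate and identical to the step ``iii)$\Rightarrow$i)'' in the proof of \cref{theo:micha_equiv}: if $f_{n_{k_l}}(x)-c_{n_{k_l}}\ra f(x)$ for all $x\in\R\setminus S$ with $S$ at most countable, then subtracting the statements at $x$ and at $y$ cancels the constants and yields $f_{n_{k_l}}(x)-f_{n_{k_l}}(y)\ra f(x)-f(y)$ for all $x,y\in\R\setminus S$, which is exactly basic convergence.

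The substantial direction is that $f_n\Ra f$ already forces the existence of a single sequence of constants. The plan is to \emph{borrow} the constants from the almost basic theory. By \cref{rem:bas_lim_unique_rem}, basic convergence implies $f_n\xRa f$, so by \cref{theo:micha_equiv} (which by the remark following it holds verbatim on $\V$) there is a \emph{fixed} sequence $\folge{c}{n}$ such that every subsequence admits a further subsequence along which the shifts converge to $f$ off a Lebesgue-null set. I claim this same $\folge{c}{n}$ works in the required countable sense. Fix a subsequence $\subfolge{f}{n}{k}$. Applying basic convergence first gives a sub-subsequence $\subsubfolge{f}{n}{k}{l}$ and an at most countable set $S_1$ with $f_{n_{k_l}}(x)-f_{n_{k_l}}(y)\ra f(x)-f(y)$ for all $x,y\notin S_1$. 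Applying the almost basic property of $\folge{c}{n}$ to this sub-subsequence then produces a further sub-subsequence $\left(f_{n_{k_{l_m}}}\right)_{m\in\N}$ and a null set $S_2$ with $f_{n_{k_{l_m}}}(x)-c_{n_{k_{l_m}}}\ra f(x)$ for all $x\notin S_2$.

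It remains to combine these two facts. Since $S_1\cup S_2$ has Lebesgue measure zero, I can pick a reference point $x_0\notin S_1\cup S_2$. For any $x\notin S_1$, the decomposition
$$f_{n_{k_{l_m}}}(x)-c_{n_{k_{l_m}}}=\bigl(f_{n_{k_{l_m}}}(x)-f_{n_{k_{l_m}}}(x_0)\bigr)+\bigl(f_{n_{k_{l_m}}}(x_0)-c_{n_{k_{l_m}}}\bigr)$$
has first summand tending to $f(x)-f(x_0)$ (as $x,x_0\notin S_1$) and second summand tending to $f(x_0)$ (as $x_0\notin S_2$), so the sum tends to $f(x)$. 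Hence $f_{n_{k_{l_m}}}(x)-c_{n_{k_{l_m}}}\ra f(x)$ for all $x\in\R\setminus S_1$ with $S_1$ at most countable, which is exactly the condition in the corollary.

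The main obstacle is precisely producing constants that serve all subsequences simultaneously: basic convergence is itself only a subsequential, non-metrizable notion, so there is no metric to read the constants off, as was done for almost basic convergence in \cref{lemma:micha_metric}. Importing the constants from \cref{theo:micha_equiv} sidesteps this, but then the delicate point is to keep the exceptional set countable rather than merely null. This is resolved by the observation that the countable set $S_1$ supplied by basic convergence is itself Lebesgue-null, so avoiding the extra null set $S_2$ when choosing $x_0$ costs nothing; the telescoping above then transfers the convergence of the shifts to the full countable exceptional set $S_1$.
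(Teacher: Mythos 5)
Your proof is correct and follows essentially the same route as the paper: the easy direction by cancelling the constants in the differences, and the hard direction by importing the constants $\folge{c}{n}$ from \cref{theo:micha_equiv} via almost basic convergence, extracting a sub-subsequence on which both the countable-exceptional-set convergence of differences and the null-exceptional-set convergence of the shifts hold, and then telescoping through a reference point $x_0$ outside both exceptional sets. The only (immaterial) differences are the order in which the two subsequence extractions are performed and that you invoke the remark extending \cref{theo:micha_equiv} to $\V$ rather than explicitly normalising by $\lim_{x\ra-\infty}f_n(x)$.
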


    \prooof
    If the second condition holds, then $f_{n_{k_l}}(x)-f_{n_{k_l}}(y)=(f_{n_{k_l}}(x)-c_{n_{k_l}})-(f_{n_{k_l}}(y)-c_{n_{k_l}})\ra f(x)-f(y)$ for all $x,y\in\R\setminus S$, hence $f_n\Ra f$ as $n\toinf$.\\
    Conversely, assume $f_n\Ra f$ for $n\toinf$. Assume $f_n\in\W$ for all $n\in\N$ and $f\in\W$. Then by \cref{theo:micha_equiv} there exist constants $\folge{c}{n}$ such that for each subsequence $\subfolge{f}{n}{k}$ of $\folge{f}{n}$ there exists a further subsequence $\subsubfolge{f}{n}{k}{l}$ and a set $S_1$ of Lebesgue measure zero such that $f_{n_{k_l}}(x)-c_{n_{k_l}}\ra f(x)$ for $l\toinf$ for all $x\in\R\setminus S_1$. By the definition of basic convergence, there exists an at most countable set $S_0$ and a further subsequence, which we do not rename, such that $f_{n_{k_l}}(x)-f_{n_{k_l}}(y)\ra f(x)-f(y)$ for $l\toinf$ and all $x,y\in\R\setminus S_0$. By fixing some $y\in\R\setminus (S_0\cup S_1)$, this implies $f_{n_{k_l}}(x)-c_{n_{k_l}}\ra f(x)$ as $n\toinf$ for all $x\in\R\setminus S_0$, since $f_{n_{k_l}}(y)-c_{n_{k_l}}\ra f(y)$ as $n\toinf$.\\
    The statement can then be generalized on $\V$ by considering $f_n-\lim_{x\ra-\infty}f_n(x)$ for all $n\in\N$ for a sequence in $
    \V$.\QED

In the proof of the next Theorem, we will use the following statement multiple times, which we reformulate here for the convenience of the reader. It is found in Bogachev \cite[Thm. 1.4.7]{bogachev2018weak} and uses a kind of convergence similar to basic convergence.
\begin{theorem}
\label{theo:bogachev_basic}
    A sequence $\folge{\mu}{n}$ of finite signed measures on $\R$ converges weakly to a finite signed measure $\mu$ if and only if all of the following conditions hold:
    \begin{enumerate}[label=(\roman*)]
        \item \label{enum:boga_i}$\folge{\mu}{n}$ is uniformly tight, that is, for all $\varepsilon>0$ there exists a compact interval $[a,b]$ such that
        $$\abs{\mu_n}(\R\setminus[a,b])<\varepsilon$$
        for all $n\in\N$.
        \item \label{enum:boga_ii}$\sup\limits_{n\in\N}\norm{\mu_n}<\infty$.
        \item \label{enum:boga_iii}For each subsequence $\left(\dis{\mu_{n_k}}\right)_{k\in\N}$ of the sequence of distribution functions $\left(\dis{\mu_n}\right)_{n\in\N}$ there exists a further subsequence $\left(\dis{\mu_{n_{k_l}}}\right)_{l\in\N}$ and an at most countable set $S$ such that $\dis{\mu_{n_{k_l}}}(x)\ra\dis\mu(x)$ for all $x\in\R\setminus S$.
        \end{enumerate}
        \iffalse The third condition can also be replaced by the following condition:\begin{enumerate}[label=(\roman*)]
        \setcounter{enumi}{3}
\item \label{enum:boga_iv} For every compact interval $[c,d]\subset\R$ with $c<d$ and every $\varepsilon>0$ there exists an $N\in\N$ such  that
            $$\inf_{t\in[c,d]}\abs{\dis\mu(t)-\dis{\mu_n}(t)}<\varepsilon$$
            for all $n>N.$
    \end{enumerate}
    \fi
\end{theorem}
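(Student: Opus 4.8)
The plan is to prove the two implications separately: the sufficiency of (i)--(iii) by a subsequence argument that reduces to a Helly--Bray computation on compact intervals, and the necessity by combining the uniform boundedness principle, Helly's selection theorem, and a gliding-hump construction. Throughout I write $M\coloneqq\sup_n\norm{\mu_n}$ and use that the total variation of $\dis{\mu_n}$ on an interval $(a,b]$ equals $\abs{\mu_n}((a,b])\le M$ once (ii) is available.

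For the sufficiency direction, assume (i)--(iii) and fix $f\in C_b(\R)$ with $C\coloneqq\norm{f}_\infty$. It suffices to show that every subsequence of $\folge{\mu}{n}$ admits a further subsequence along which $\intr f\,d\mu_n\ra\intr f\,d\mu$, since then the whole numerical sequence converges. Given a subsequence, I apply (iii) to extract $\subsubfolge{\mu}{n}{k}{l}$ with $\dis{\mu_{n_{k_l}}}(x)\ra\dis\mu(x)$ for all $x$ outside an at most countable set $S$. Fixing $\eps>0$, by (i) and finiteness of $\mu$ I choose $a<b$ with $a,b\notin S$ so that $\abs{\mu_n}(\R\setminus[a,b])<\eps$ for all $n$ and $\abs{\mu}(\R\setminus[a,b])<\eps$; the tail contributions to $\intr f\,d\mu_{n_{k_l}}$ and to $\intr f\,d\mu$ are then each at most $C\eps$. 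On $[a,b]$ the distribution functions converge at the endpoints and are uniformly bounded in variation by $M$, so Helly's second theorem gives $\int_{(a,b]}f\,d\mu_{n_{k_l}}\ra\int_{(a,b]}f\,d\mu$ as $l\toinf$. Hence $\limsup_l\abs{\intr f\,d\mu_{n_{k_l}}-\intr f\,d\mu}\le 2C\eps$, and letting $\eps\downarrow0$ finishes this direction.

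For necessity, assume $\mu_n\xr{w}\mu$. Each $\mu_n$ is, via Riesz representation, a bounded functional of norm $\norm{\mu_n}$ on the separable Banach space $C_0(\R)$, and $\intr f\,d\mu_n$ converges for every $f\in C_0(\R)\subset C_b(\R)$; the uniform boundedness principle then yields $\sup_n\norm{\mu_n}<\infty$, which is (ii). For (iii), given any subsequence, Helly's selection theorem (using the uniform variation bound $M$ and a diagonal argument over the intervals $[-N,N]$) produces a further subsequence $\subsubfolge{\mu}{n}{k}{l}$ along which $\dis{\mu_{n_{k_l}}}$ converges off a countable set to the distribution function of some locally finite signed measure $\nu$; by the Helly--Bray computation above the measures converge vaguely to $\nu$, so uniqueness of the vague limit forces $\nu=\mu$ and hence $\dis{\mu_{n_{k_l}}}\ra\dis\mu$ off a countable set.

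The main obstacle is the uniform tightness (i), where naive test functions fail because of sign cancellation. I would argue by contradiction: if (i) fails there are $\eps_0>0$, indices $n_j$, and radii $R_j\uparrow\infty$, chosen increasing fast enough that the annuli $\{R_j<\abs{x}\le R_{j+1}\}$ are disjoint and carry almost all of the escaping variation, with $\abs{\mu_{n_j}}(\{\abs{x}>R_j\})\ge\eps_0$. On each annulus I use regularity of $\mu_{n_j}$ together with Lusin's theorem to build a continuous $f_j$ supported there with $\norm{f_j}_\infty\le1$ and $\int f_j\,d\mu_{n_j}\ge\eps_0/2-\delta_j$; crucially, Lusin's theorem lets one defeat the cancellation even when the Hahn decomposition sets of $\mu_{n_j}$ are intricately interleaved. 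Gluing the disjointly supported $f_j$ into a single $f\in C_b(\R)$ with $\norm{f}_\infty\le1$, and bookkeeping the cross terms $\int f_i\,d\mu_{n_j}$ for $i\ne j$ to be negligible, one obtains that $\intr f\,d\mu_{n_j}$ does not converge to $\intr f\,d\mu$, contradicting weak convergence. This tightness step, and in particular the careful choice of annuli so that the glued function simultaneously controls every index, is the part I expect to require the most work.
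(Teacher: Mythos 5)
First, a remark on the comparison itself: the paper does not prove this statement at all --- it is quoted from Bogachev \cite[Thm. 1.4.7]{bogachev2018weak} ``for the convenience of the reader'', so there is no in-paper proof to measure you against. Judged on its own merits, your architecture is the standard one, and the sufficiency direction is sound: the subsequence reduction, the tail cut-off via (i), and the Helly--Bray step on $[a,b]$ (with endpoints and partition points chosen outside the countable exceptional set, so that the Riemann--Stieltjes sums converge termwise and the approximation error is controlled by $M$ times the modulus of continuity of $f$) all go through, as does (ii) via the uniform boundedness principle.

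Two points in the necessity direction need repair. First, in your argument for (iii), identifying the Helly limit $G$ with $\dis\mu$ through uniqueness of the vague limit only determines $G$ up to an additive constant: vague convergence sees only the increments $G(y)-G(x)$, so you obtain $G=\dis\mu+c$ off a countable set and still need $G(x)\ra 0$ as $x\ra-\infty$ to conclude $c=0$; that requires the uniform tightness (i), so (i) must be established \emph{before} (iii), not after. Second, and more substantively, the gliding-hump bookkeeping as you describe it does not close: the cross terms $\intr f_i\,d\mu_{n_j}$ for $i<j$ are \emph{not} negligible, since each $f_i$ is a fixed compactly supported continuous function and $\intr f_i\,d\mu_{n_j}\ra\intr f_i\,d\mu$, which has no reason to be small. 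If those terms really were negligible you would only conclude $\intr g\,d\mu_{n_j}\approx\intr f_j\,d\mu_{n_j}\geq\eps_0/2$, which is not a contradiction because $\intr g\,d\mu$ may itself exceed $\eps_0/2$. The accounting that works is: choose $n_j$ inductively so large that $\abs{\intr f_i\,d\mu_{n_j}-\intr f_i\,d\mu}<2^{-j}$ for all $i<j$; then $\sum_{i<j}\intr f_i\,d\mu_{n_j}\ra\sum_{i=1}^{\infty}\intr f_i\,d\mu=\intr g\,d\mu$ (disjoint supports, $\abs{\mu}$ finite), and subtracting this from $\intr g\,d\mu_{n_j}\ra\intr g\,d\mu$, together with the small $i>j$ tail, forces $\intr f_j\,d\mu_{n_j}\ra 0$, contradicting $\intr f_j\,d\mu_{n_j}\geq\eps_0/2$. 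With these two repairs your proof is complete.
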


We provide a similar statement for vague convergence, which puts basic convergence into relation with vague convergence.

\begin{theorem}
\label{theo:micha_helly}
    
    Let $\mu_n, \mu\quad (n\in\N)$ be finite signed measures. Then $\mu_n\xr{v}\mu$ if and only if
    \begin{enumerate}[label=(\roman*)]
            \item \label{enum:micha_i}
            $\mu_n$ converges to $\mu$ either basically or almost basically and
            \item \label{enum:micha_ii} $\folge{\mu}{n}$ is locally uniformly bounded in variation, i.e. for all $r\in\R$ we have
    $$\sup_{n\in\N}\abs{\mu_n}([-r,r])\coloneqq C_r<\infty.$$
    \end{enumerate}  
    
    Moreover, in (ii) it is sufficient if only $(\mu^+_n)_{n\in\N}$ or $(\mu^-_n)_{n\in\N}$ is locally bounded in variation for any decomposition of $\folge{\mu}{n}$ into non-negative measures $\left(\mu^+_n\right)_{n\in\N}$ and $\left(\mu^-_n\right)_{n\in\N}$ such that $\mu_n=\mu_n^+-\mu_n^-$ for all $n\in\N$.
\end{theorem}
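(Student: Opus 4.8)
The plan is to prove both implications by transferring everything to distribution functions via the integration-by-parts identity
$$\intr f\,d\mu_n=-\intr f'(x)\dis{\mu_n}(x)\,dx,\qquad f\in C_c^1(\R),$$
which holds because $f$ has compact support. Two further ingredients will do the work: \cref{theo:micha_equiv}, which lets me rephrase almost basic convergence as convergence of $\dis{\mu_n}$ to $\dis\mu$ locally in Lebesgue measure up to a globally fixed sequence of constants $\folge{c}{n}$; and the elementary observation that hypothesis (ii) controls the oscillation of $\dis{\mu_n}$, since $\abs{\dis{\mu_n}(x)-\dis{\mu_n}(y)}\le\abs{\mu_n}([-r,r])\le C_r$ for $x,y\in[-r,r]$. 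Since basic convergence implies almost basic convergence, it suffices to prove the forward direction yielding basic convergence and the converse assuming only almost basic convergence.

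For ``(i) and (ii) $\Rightarrow\mu_n\xr{v}\mu$'', I would first take $f\in C_c^1(\R)$ with $\supp f\subset[-r,r]$. As $\intr f'\,dx=0$, the constants drop out, giving $\intr f\,d\mu_n=-\intr f'(x)(\dis{\mu_n}(x)-c_n)\,dx$. Given any subsequence, \cref{theo:micha_equiv} provides a further subsequence along which $\dis{\mu_n}-c_n\to\dis\mu$ $\lambda$-almost everywhere on $[-r,r]$; the crucial point is that along this subsequence $\dis{\mu_n}-c_n$ is in fact \emph{uniformly} bounded on $[-r,r]$, because its values are bounded at any point of a.e.-convergence and its oscillation is $\le C_r$ by (ii). Dominated convergence then yields $\intr f\,d\mu_n\to-\intr f'\dis\mu\,dx=\intr f\,d\mu$ along the further subsequence, and the subsequence principle promotes this to convergence of the full sequence. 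Uniformly approximating $f\in C_c(\R)$ by $C_c^1(\R)$ functions supported in a fixed compact set, and using $\abs{\intr(f-g)\,d\mu_n}\le\norm{f-g}_\infty C_r$, extends the conclusion to all $f\in C_c(\R)$.

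For the converse, assume $\mu_n\xr{v}\mu$. I would first obtain (ii) from the uniform boundedness principle: the functionals $f\mapsto\intr f\,d\mu_n$ converge pointwise on the barrelled space $C_c(\R)$, hence are equicontinuous, which bounds $\abs{\mu_n}$ uniformly on each compact interval (equivalently, a gliding-hump argument produces a bad test function whenever $\sup_n\abs{\mu_n}([-r,r])=\infty$). With (ii) available, I would verify \cref{def:bas_conv} directly: given a subsequence, normalise $G_n:=\dis{\mu_n}-\dis{\mu_n}(0)$, which by (ii) is uniformly bounded and of uniformly bounded variation on each $[-m,m]$, and apply Helly's selection theorem together with a diagonal argument to extract a further subsequence converging pointwise on all of $\R$ to some $G$ of locally bounded variation. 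Integration by parts, dominated convergence, and the vague limit then give $\intr f\,dG=\intr f\,d\mu$ for all $f\in C_c^1(\R)$, hence $dG=d\mu$; thus the differences of $G$ agree with those of $\dis\mu$ off the countable set of atoms of $\mu$, so $\dis{\mu_n}(x)-\dis{\mu_n}(y)=G_n(x)-G_n(y)\to\dis\mu(x)-\dis\mu(y)$ for all $x,y$ outside a countable set, which is exactly basic convergence.

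For the ``moreover'' clause it is enough to show that one-sided local boundedness, together with almost basic convergence, upgrades to (ii). Suppose $\mu_n=\mu_n^+-\mu_n^-$ with $\mu_n^+,\mu_n^-\ge0$ and $\sup_n\mu_n^-([-r,r])<\infty$ for every $r$. If $\mu_n^+([-r,r])\to\infty$ along some subsequence, then for fixed $a<-r<r<b$ we get $\mu_n((a,b])\ge\mu_n^+([-r,r])-\mu_n^-((a,b])\to\infty$ along that subsequence, whereas almost basic convergence supplies a further subsequence along which $\mu_n((a,b])=\dis{\mu_n}(b)-\dis{\mu_n}(a)\to\dis\mu(b)-\dis\mu(a)$ for $\lambda$-almost all such $a,b$ --- a contradiction. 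Hence $\mu_n^+$ is locally uniformly bounded too, and since $\abs{\mu_n}\le\mu_n^++\mu_n^-$ for any non-negative decomposition, (ii) follows and the main case applies. I expect the principal obstacles to be exactly the two delicate points flagged above: deducing local uniform boundedness in variation from bare vague convergence (the non-Banach nature of $C_c(\R)$ makes the uniform boundedness step subtle, and a direct gliding-hump construction must confine all humps to one compact interval), and, in both directions, turning convergence ``up to unknown constants $c_n$'' into genuine domination, where the oscillation bound from (ii) is precisely what pins $\dis{\mu_n}-c_n$ down to a uniformly bounded family and makes dominated convergence available.
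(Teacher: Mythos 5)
Your proposal is correct. The sufficiency direction (almost basic convergence plus local uniform boundedness in variation implies vague convergence) and the ``moreover'' clause follow essentially the same lines as the paper: integration by parts against $f\in C_c^1(\R)$, cancellation of the constants $c_n$ from \cref{theo:micha_equiv} via $\int f'\,dx=0$, uniform boundedness of $\dis{\mu_n}-c_n$ on $[-r,r]$ from the oscillation bound $C_r$ plus convergence at one point, dominated convergence along subsequences, and uniform approximation of $C_c$ by $C_c^1$; likewise your contradiction argument for the one-sided boundedness upgrade is the paper's \cref{eq:bound_var} argument in slightly different clothing. Where you genuinely diverge is the necessity direction. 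The paper establishes both (i) and (ii) by localizing with a bump function $\varphi_r$ ($\varphi_r\equiv 1$ on $[-r,r]$, supported in $[-r-1,r+1]$), observing that $d\mu_n^r=\varphi_r\,d\mu_n$ converges \emph{weakly}, and then importing both the uniform variation bound and condition \ref{enum:boga_iii} from Bogachev's criterion (\cref{theo:bogachev_basic}); you instead get (ii) from Banach--Steinhaus (applied either to the barrelled LF-space $C_c(\R)$ or, more simply, to the Banach space of continuous functions supported in a fixed compact, whose dual norm recovers $\abs{\mu_n}$ of an open interval) and get basic convergence from Helly's selection theorem plus a diagonal argument, identifying the pointwise limit $G$ of $\dis{\mu_{n_{k_l}}}-\dis{\mu_{n_{k_l}}}(0)$ with $\dis\mu$ up to an additive constant via the du Bois-Reymond lemma, and then upgrading a.e.\ equality to equality off a countable set using that both functions are of locally bounded variation. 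Your route is more self-contained (it does not lean on the cited weak-convergence characterization) at the cost of invoking Helly and a compactness/identification step; the paper's route is shorter given that Bogachev's theorem is available as a black box. Two small points of hygiene, neither a gap: your phrase ``$dG=d\mu$'' should be read as ``$G-\dis\mu$ is a.e.\ constant, hence they differ by a constant outside the at most countable set of discontinuities of $G$ and $\dis\mu$'', since $G$ need not be right-continuous; and the exceptional countable set is not only the atoms of $\mu$ but also the possible discontinuities of $G$, which is harmless for basic convergence.
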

\prooof
Let $\dis{\mu}$ and $\dis{\mu_n}$ be the distribution functions of $\mu$ and $\mu_n$ for all $n\in\N$. Assume that $\dis{\mu_n}\xRa\dis{\mu}$ and that $\folge{\mu}{n}$ is locally uniformly bounded in variation. Let $f\in C_c^1(\R)$. Choose $r\in\R$ such that $\supp(f)\subset[-r,r].$ Then 
\begin{equation}
\label{eq:boga_part_conv}
\intr f(t)d\mu_n(t)=-\intl[-\infty]^{\infty} f'(t)\dis{\mu_n}(t)dt    
\end{equation}
and
\begin{equation}
\label{eq:boga_part_conv_2}
\intr f(t)d\mu(t)=-\intl[-\infty]^{\infty} f'(t)\dis{\mu}(t)dt,    
\end{equation}

by partial integration. Let $\folge{c}{n}$ be as in \cref{theo:micha_equiv} (iii). Since $f$ has compact support, $\intl[-\infty]^{\infty}f'(x)dx=0$. Hence, if we define $g_n\coloneqq \dis{\mu_n}-c_n$, then
\begin{equation*}
    \intl[-\infty]^{\infty} f'(t)\dis{\mu_n}(t)dt=\intl[-\infty]^{\infty} f'(t)g_n(t)dt
\end{equation*}
and thus with \cref{eq:boga_part_conv}
\begin{equation}
\label{eq:micha_2}
    \intr f(t)d\mu_n(t)=-\intl[-\infty]^{\infty} f'(t)g_n(t)dt.\end{equation}
Now let $\subfolge{\mu}{n}{k}$ be a subsequence of $\folge{\mu}{n}$. By definition of $\folge{c}{n}$ and \cref{theo:micha_equiv}, there exists a further subsequence $\subsubfolge{\mu}{n}{k}{l}$ such that $g_{n_{k_l}}\ra\dis{\mu}$ pointwise $\lambda$-almost everywhere for $l\toinf$. Thus $f'(t)g_{n_{k_l}}(t)$ converges pointwise to $f'(t)\dis{\mu}(t)$ for $\lambda$-almost all $t$ for $l\toinf$. Moreover, the convergence in some points and uniform boundedness in variation of $\folge{g}{n}$ restricted to $[-r,r]$ implies that $\folge{g}{n}$ is uniformly bounded on $[-r,r]$, i.e. $\sup_{x\in [-r,r],n\in\N}\abs{g_n(x)}<\infty$. Then the sequence of functions $\folge{f'g}{n_{k_l}}$ converges pointwise $\lambda$-almost everywhere to $f'\dis{\mu}$, is uniformly bounded and has support in $[-r,r]$, hence the dominated convergence theorem yields
\begin{equation*}
    \intl[-\infty]^\infty f'(t)g_{n_{k_l}}(t)dt\ra\intl[-\infty]^\infty f'(t)\dis{\mu}(t)dt,\quad l\toinf.
\end{equation*}
%Here, measurability of $f'g_{n''}$ is clear, since $f'$ is continuous and $g_{n''}$ has bounded variation, hence $f'g_{n''}$ is continuous everywhere except for an at most countable set.
Together with \cref{eq:micha_2,eq:boga_part_conv_2} we have shown that each subequence $\subfolge{\mu}{n}{k}$ contains a further subsequence $\subsubfolge{\mu}{n}{k}{l}$ such that 
\begin{equation*}
\label{eq:micha_3}
    \intr fd\mu_{n_{k_l}} =-\intl[-\infty]^\infty f'(t)g_{n_{k_l}}(t)\ra -\intl[-\infty]^\infty f'(t)\dis{\mu}(t) =\intr fd\mu
\end{equation*}
for $l\toinf$.
But then the whole sequence $\left(\intr fd\mu_n\right)_{n\in\N}$ must converge to $\intr fd\mu$ as well for $n\toinf$. Hence $\intr fd\mu_n\ra\intr fd\mu$ for all $f\in C_c^1(\R)$.\\
For $f\in C_c(\R)$ with $\supp f\subset[-r,r]$, we can approximate $f$ uniformly with functions $g$ in $C_c^1(\R)$ with $\supp(g)\subset[-r,r]$. Uniform boundedness of the total variations of $\folge{\mu}{n}$ on $[-r,r]$ and convergence $\intr gd\mu_n\ra\intr gd\mu$ then implies $\intr fd\mu_n\ra\intr fd\mu$. Hence $\mu_n\xr{v}\mu$. Since basic convergence implies almost basic convergence, we have shown that $i)$ and $ii)$ imply vague convergence.\\

To show the converse, let $\mu_n,\mu\quad (n\in\N)$ be finite signed measures, not locally uniformly bounded in variation. Then there exists an $r\in\R$ and a subsequence $\subfolge{\mu}{n}{k}$ such that
$$\abs{\mu_{n_k}}([-r,r])>k$$
for all $k\in\N$. Let $\varphi\in C_c(\R)$ be such that $\norm{\varphi}_\infty = 1$ and $\varphi\equiv1$ on $[-r,r]$ and $\varphi\equiv 0$ on $\R\setminus[-r-1,r+1]$.

Moreover, define the signed measure $\tilde\mu$ by $d\tilde\mu=\varphi d\mu$, i.e. $\tilde\mu$ is the signed measure with density $\varphi$ with respect to $\mu$, and analogously let $d\tilde\mu_n=\varphi d\mu$ for all $n\in\N$ .\\
Since $\varphi\equiv 1$ on $[-r,r]$, $\mu$ and $\tilde\mu$ coincide on subsets of $[-r,r]$. The same is true for $\tilde\mu_n$ and $\mu_n$ for all $n\in\N$. This implies that for all $k\in\N$
$$\norm{\tilde\mu_{n_k}}\geq\abs{\tilde\mu_{n_k}}([-r,r])=\abs{\mu_{n_k}}([-r,r])>k,$$
hence the measures $\tilde\mu_{n_k}$ are not bounded in variation. Since each weakly convergent sequence of signed measures is bounded in variation by \cref{theo:bogachev_basic}, this implies that $\tilde\mu_{n_k}\not\xr{w}\tilde\mu$. Hence there exists a function $f\in C_b(\R)$ such that $\intr fd\tilde \mu_{n_k}\not\ra\intr fd\tilde\mu.$\\
Now, since $\varphi$ is the density of $\tilde\mu$ with respect to $\mu$,
$$\intr fd\tilde\mu=\intr f\cdot\varphi d\mu$$
and analogously for all $n\in\N$
$$\intr fd\tilde\mu_n=\intr f\cdot\varphi d\mu_n.$$
Since both $f$ and $\varphi$ are continuous and $\supp(\varphi)\subset [-r-1,r+1]$, the function $f\cdot\varphi$ is in $C_c(\R)$. But by the choice of $f$
$$ \intr f\cdot\varphi d\mu_n = \intr fd\tilde\mu_n \not\ra \intr fd\tilde\mu =\intr f\cdot\varphi d\mu$$
for $n\toinf$.
Hence $\mu_n\not\xr{v}\mu$ for $n\toinf$. We have shown that any vaguely convergent sequence of finite signed measures is locally uniformly bounded in variation.\\
It remains to show that any vaguely convergent sequence of finite signed measures is basically convergent as well. Let $\mu_n,\mu$ be finite signed measures with $\mu_n\xr{v}\mu$ and let $r>0$ be arbitrary, but fixed. Similar to above, let $\varphi_r\in C_c(\R)$ be such that $\norm{\varphi_r}_\infty = 1$ and that $\varphi_r\equiv1$ on $[-r,r]$ and $\varphi_r\equiv 0$ on $\R\setminus[-r-1,r+1]$.
We define the signed measures $\mu_{n}^r,\mu^r$ by $d\mu_n^r=\varphi_rd\mu_n$ and $d\mu^r=\varphi_rd\mu$.\\
Let $f\in C_b(\R)$ be arbitrary, but fixed. Then $f\cdot\varphi_r\in C_c(\R)$, hence 
$$\intr f\cdot\varphi_rd\mu_n\ra\intr f\cdot\varphi_rd\mu\ntoinf$$
due to the vague convergence of $\mu_n$ to $\mu$, thus similar to above
\begin{align*}
    \intr fd\mu_n^r &=\intr f\cdot\varphi_rd\mu_n \ra\intr f\cdot\varphi_rd\mu=\intr fd\mu^r
\end{align*}
for $n\toinf$.
We see that $\mu_n^r\xr{w}\mu^r$. It follows from \cref{theo:bogachev_basic} that $\mu_n^r\Ra\mu^r$. But since $\mu_n^r$ and $\mu_n$ coincide on $[-r,r]$ for all $n\in\N$ and so do $\mu^r$ and $\mu$, the corresponding distribution functions differ only by constants on $[-r,r]$. Hence the basic convergence $\mu_n^r\Ra\mu^r$ implies that $\mu_n\Ra\mu$ on $[-r,r]$, i.e. for each subsequence $\subfolge{\mu}{n}{k}$ of $\folge{\mu}{n}$ there exists a further subsequence $\subsubfolge{\mu}{n}{k}{l}$ and an at most countable set $S$ such that 
$$\dis{\mu_{n_{k_l}}}(x)-\dis{\mu_{n_{k_l}}}(y)\ra\dis\mu(x)-\dis\mu(y)$$
for all $x,y\in [-r,r]\setminus S$ for $l\toinf$.\\
Through a classical diagonal sequence argument, this basic convergence on compact intervals implies $\mu_n\Ra\mu$ on $\R$. Since basic convergence implies almost basic convergence, we have shown that vague convergence implies i) and ii).\\

Regarding the sufficient condition of local boundedness of $(\mu_n^+)_{n\in\N}$ or $(\mu_n^-)_{n\in\N}$, assume that $\mu_n$ converges basically or almost basically to $\mu$. For all $n\in\N$ let $\mu_n^+,\mu_n^-$ be some nonnegative measures such that $\mu_n=\mu_n^+-\mu_n^-$. Then for any $x,y\in\R$
\begin{equation}
    \label{eq:bound_var}
    \dis{\mu_n}(x)-\dis{\mu_n}(y)=\brc{\dis{\mu_n^+}(x)-\dis{\mu
_n^+}(y)} -\brc{\dis{\mu_n^-}(x)-\dis{\mu_n^-}(y)}.
\end{equation}
Now let $x>y$ be arbitrary, but fixed and assume that $(\mu^+_n)_{n\in\N}$ is locally bounded. Then due to the (almost) basic convergence, for any subsequence $\subfolge{\mu}{n}{k}$ there exists a further subsequence $\subsubfolge{\mu}{n}{k}{l}$ and some $\tilde x>x,\tilde y<y
$ such that $\dis{\mu_{n_{k_l}}}(\tilde x)-\dis{\mu_{n_{k_l}}}(\tilde y)$ converges to $\dis\mu(\tilde x)-\dis\mu(\tilde y)$ for $l\toinf$. Therefore $\dis{\mu_{n_{k_l}}}(\tilde x)-\dis{\mu_{n_{k_l}}}(\tilde y)$ is bounded in $l$. Now since $(\mu^+_n)_{n\in\N}$ is locally bounded, it follows from \cref{eq:bound_var} that $\dis{\mu^-_{n_{k_l}}}(\tilde x)-\dis{\mu^-_{n_{k_l}}}(\tilde y)$ is bounded in $l$ as well.\\
Since $\dis{\mu^-_n}$ is monotonically increasing for all $n\in\N$, it follows that $\dis{\mu^-_{n_{k_l}}}(x)-\dis{\mu^-_{n_{k_l}}}(y)$ is bounded in $l$ as well. Since every subsequence of $\brc{\dis{\mu^-_n}(x)-\dis{\mu^-_n}(y)}_{n\in\N}$ contains a subsequence that is bounded, the whole sequence has to be bounded. Hence we see that the local uniform boundedness of $(\mu^+_n)_{n\in\N}$ implies the local uniform boundedness of $(\mu^-_n)_{n\in\N}$, which clearly implies that $\folge{\mu}{n}$ is locally uniformly bounded in variation.\\
The same way, it can also be shown that local uniform boundedness in variation of the negative parts implies local uniform boundedness in variation of the positive parts and hence of the whole sequence.

\QED

The following is now immediate from \cref{theo:micha_helly}:
\begin{corollary}
    Let $\mu_n, \mu\quad (n\in\N)$ be finite signed measures. Assume that $\folge{\mu}{n}$ is locally bounded in variation. Then the following are equivalent:
    \begin{enumerate}
        \item $\mu_n\xr{v}{\mu}$
        \item $\mu_n$ converges basically to $\mu$.
        \item $\mu_n$ converges almost basically to $\mu$.
    \end{enumerate}
\end{corollary}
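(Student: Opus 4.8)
The plan is to read off all three equivalences directly from \cref{theo:micha_helly}, using that the standing hypothesis of the corollary --- local uniform boundedness in variation of $(\mu_n)_{n\in\N}$ --- is precisely condition (ii) of that theorem. Under this hypothesis \cref{theo:micha_helly} reduces to the assertion that $\mu_n\xr{v}\mu$ holds if and only if $\mu_n$ converges to $\mu$ basically or almost basically, so the whole task is to promote this disjunction into a cyclic chain of implications between vague, basic and almost basic convergence.

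First I would dispose of the two effortless links. That basic convergence implies almost basic convergence is recorded in \cref{rem:bas_lim_unique_rem}, which yields the implication from the second condition to the third with no work at all. For the implication from the third condition back to vague convergence, almost basic convergence is exactly one of the two alternatives permitted in condition (i); combined with the assumed boundedness (condition (ii)), \cref{theo:micha_helly} immediately returns $\mu_n\xr{v}\mu$. This is in fact the only place where the standing hypothesis of the corollary is genuinely used.

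The remaining link, from vague convergence to basic convergence, is where a little care is needed. I would not simply quote the biconditional in \cref{theo:micha_helly} at face value, since its right-hand side only guarantees \emph{basic or almost basic} convergence; instead I would appeal to the necessity direction of its proof, in which vague convergence is shown to force the \emph{stronger} conclusion of basic convergence (via the reduction to the weakly convergent truncations $\mu_n^r$, an application of \cref{theo:bogachev_basic}, and a diagonal argument over the intervals $[-r,r]$). This delivers basic convergence outright, and chaining the three steps closes the loop: vague convergence $\Rightarrow$ basic convergence $\Rightarrow$ almost basic convergence $\Rightarrow$ vague convergence.

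The one subtlety worth flagging --- the closest thing to an obstacle --- is the asymmetry built into \cref{theo:micha_helly}: its sufficiency direction assumes only the weakest form of convergence (almost basic), while its necessity direction produces the strongest (basic). It is exactly this sandwich, available once condition (ii) is granted, that collapses vague, basic and almost basic convergence onto one another rather than merely bracketing vague convergence between the two. No genuinely new estimate is required beyond what \cref{theo:micha_helly} already supplies.
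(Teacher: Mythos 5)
Your argument is correct and is exactly the intended one: the paper derives this corollary as ``immediate from \cref{theo:micha_helly}'', and your chain (vague $\Rightarrow$ basic via the necessity part of that theorem's proof, basic $\Rightarrow$ almost basic trivially, almost basic $+$ local uniform boundedness $\Rightarrow$ vague via the sufficiency part) is precisely why it is immediate. You are also right to flag that the stated biconditional only yields ``basic or almost basic'' and that one must invoke the proof of the necessity direction (or read condition (i) as asserting the stronger alternative) to obtain basic convergence outright.
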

    
Using \cref{theo:micha_equiv} we can reformulate \cref{theo:micha_helly} as follows:

\begin{corollary}
    Let $\mu_n,\mu\quad(n\in\N)$ be finite signed measures and $\dis{\mu_n}$ and $\dis{\mu}$ their corresponding distribution functions. Then $\mu_n\xr{v}\mu$ if and only if $\folge{\mu}{n}$ is locally uniformly bounded in variation and there exists a sequence $\folge{c}{n}$ in $\R$ such that $\dis{\mu_n}+c_n$ converges to $\dis\mu$ locally in Lebesgue measure.
\end{corollary}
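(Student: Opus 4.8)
The plan is to obtain this corollary directly by composing the two equivalences established earlier, namely \cref{theo:micha_helly} and \cref{theo:micha_equiv}, so that no new estimates are required. The whole argument is a translation of the already-proven characterization of vague convergence into the language of convergence in Lebesgue measure.

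First I would apply \cref{theo:micha_helly} to rewrite vague convergence. That theorem states that $\mu_n\xr{v}\mu$ holds if and only if $\folge{\mu}{n}$ is locally uniformly bounded in variation and $\mu_n$ converges to $\mu$ basically or almost basically. Since basic convergence implies almost basic convergence, and the equivalence in \cref{theo:micha_helly} is valid for either notion, it is cleanest to read its first condition as almost basic convergence of the distribution functions, i.e.\ $\dis{\mu_n}\xRa\dis{\mu}$. Thus vague convergence is equivalent to the conjunction of local uniform boundedness in variation and $\dis{\mu_n}\xRa\dis{\mu}$.

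Next I would translate the almost basic convergence into convergence in measure using \cref{theo:micha_equiv}. The distribution functions $\dis{\mu_n}$ and $\dis{\mu}$ all belong to $\W$, so the equivalence of statements (i) and (ii) in \cref{theo:micha_equiv} applies and shows that $\dis{\mu_n}\xRa\dis{\mu}$ holds if and only if there is a sequence $\folge{c}{n}$ of reals with $\dis{\mu_n}-c_n\ra\dis{\mu}$ locally in Lebesgue measure. Replacing the constants $c_n$ by $-c_n$ — which is harmless, as only the existence of such a sequence is asserted — this is exactly the condition that $\dis{\mu_n}+c_n$ converges to $\dis{\mu}$ locally in Lebesgue measure. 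Chaining the two equivalences then yields the claimed characterization. I do not expect a genuine obstacle, since the statement is a direct reformulation; the only point that needs a little care is to bridge the two theorems via almost basic rather than basic convergence, so that \cref{theo:micha_equiv} — which is phrased for the almost basic notion on $\W$ — can be applied verbatim to the distribution functions.
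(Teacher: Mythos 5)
Your proposal is correct and is exactly the paper's argument: the corollary is stated there as an immediate reformulation obtained by chaining \cref{theo:micha_helly} with the equivalence (i)$\Leftrightarrow$(ii) of \cref{theo:micha_equiv}, using that distribution functions lie in $\W$ and that the disjunction ``basically or almost basically'' collapses to almost basic convergence. The sign convention on $c_n$ is, as you note, immaterial.
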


   The connection between vague convergence of signed measures and convergence of their distribution functions has already been studied before in Herdegen et al. \cite{herdegen2022vague}. In their work, they introduced the notion of a sequence of signed measures $\mu_n,n\in\N$ \emph{having no mass} at a point $x\in\R$, if for any $\varepsilon>0$ there exists an open neighbourhood $N_{x,\varepsilon}$ of $x$ such that $$\limsup_{n\toinf}\abs{\mu_n}(N_{x,\varepsilon})\leq\varepsilon.$$
    They then proved:
    
    \begin{theorem}\label{theo:herdegen}{\cite[Thm. 3.8]{herdegen2022vague}}\\
        Let $a\in\R$ and $\mu,\mu_n$ be signed measures ($n\in\N$).
\begin{enumerate}[label=(\alph*)]
    \item If $\dis{\mu_n}(x)-\dis{\mu_n}(a)\ra\dis\mu(x)-\dis\mu(a)$ at all continuity points $x$ of $\dis\mu$ and $\folge{\mu}{n}$ is locally uniformly bounded in variation, then $\mu_n\xr{v}\mu$.
    \item If $\mu_n\xr{v}\mu$, $a$ is a continuity point of $\dis\mu$, and $\folge{\mu}{n}$ has no mass at the continuity points of $\mu$, then $\dis{\mu_n}(x)-\dis{\mu_n}(a)\ra\dis\mu(x)-\dis\mu(a)$ for all continuity points $x$ of $\dis\mu$.
\end{enumerate}
    \end{theorem}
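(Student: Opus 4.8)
The plan is to deduce both halves of \cref{theo:herdegen} from the material already in hand, reading part (a) off \cref{theo:micha_helly} and proving part (b) directly from vague convergence, with the \emph{no mass} hypothesis doing the essential work. The two statements pull in opposite directions: (a) converts pointwise information about the $\dis{\mu_n}$ into a statement about integrals, which is exactly what \cref{theo:micha_helly} handles, whereas (b) must upgrade vague convergence to a \emph{full}-sequence pointwise statement, something vague convergence cannot do on its own.

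For part (a), first I would turn the anchored convergence at $a$ into convergence of differences: for any continuity points $x,y$ of $\dis\mu$,
$$\dis{\mu_n}(x)-\dis{\mu_n}(y)=\brc{\dis{\mu_n}(x)-\dis{\mu_n}(a)}-\brc{\dis{\mu_n}(y)-\dis{\mu_n}(a)}\ra\dis\mu(x)-\dis\mu(y),$$
applying the hypothesis to $x$ and to $y$. Since $\dis\mu\in\W$ has bounded variation, its set of discontinuities is at most countable, so this convergence holds off an at most countable set that does not depend on any subsequence; in particular $\mu_n\Ra\mu$ in the sense of \cref{def:bas_conv}. Combined with the assumed local uniform boundedness in variation, the ``if'' direction of \cref{theo:micha_helly} yields $\mu_n\xr{v}\mu$, which is part (a). In effect (a) is a corollary of the main theorem.

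For part (b), I would work directly with vague convergence, since the conclusion concerns the whole sequence $\dis{\mu_n}(x)-\dis{\mu_n}(a)=\mu_n((a,x])$ (I take $x>a$; the case $x<a$ is symmetric), which basic convergence alone would only pin down along subsequences. Fix a continuity point $x$ and $\eps>0$. Using \emph{no mass} at the continuity points $a$ and $x$, I would choose $\delta>0$ so small that $\limsup_{n\toinf}\abs{\mu_n}((a,a+\delta))\leq\eps$, $\limsup_{n\toinf}\abs{\mu_n}((x,x+\delta))\leq\eps$, and also $\abs{\mu}((a,a+\delta))+\abs{\mu}((x,x+\delta))\leq\eps$. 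Then I take a trapezoidal $f\in C_c(\R)$ with $0\leq f\leq1$, $f\equiv1$ on $[a+\delta,x]$, $f\equiv0$ off $(a,x+\delta)$, and linear ramps of width $\delta$ near $a$ and $x$, so that $f$ and $\ind_{(a,x]}$ differ only on $(a,a+\delta)\cup(x,x+\delta)$. This yields $\abs{\intr f\,d\mu_n-\mu_n((a,x])}\leq\abs{\mu_n}((a,a+\delta))+\abs{\mu_n}((x,x+\delta))$ and the analogous bound for $\mu$. Vague convergence gives $\intr f\,d\mu_n\ra\intr f\,d\mu$, and feeding the three estimates through the triangle inequality produces $\limsup_{n\toinf}\abs{\mu_n((a,x])-\mu((a,x])}\leq3\eps$; letting $\eps\downarrow0$ gives $\mu_n((a,x])\ra\mu((a,x])$, i.e.\ $\dis{\mu_n}(x)-\dis{\mu_n}(a)\ra\dis\mu(x)-\dis\mu(a)$.

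The genuinely delicate step is the endpoint control in part (b): vague convergence says nothing about the behaviour of $\mu_n$ near the individual points $a$ and $x$, and mass can in principle pile up there (as the mass-everywhere example flagged in the introduction shows), so the limit of $\mu_n((a,x])$ need not exist without a hypothesis. The \emph{no mass} condition is exactly what removes this, and the only care required is to synchronise the ramp widths $\delta$ with the neighbourhoods it supplies so that the transition-region masses are uniformly small in $n$; everything else, including the bounded-variation and integration-by-parts bookkeeping, is already absorbed into \cref{theo:micha_helly}.
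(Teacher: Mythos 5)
Your proposal is correct, but note that the paper itself offers no proof of \cref{theo:herdegen}: it is quoted verbatim from Herdegen et al.\ \cite[Thm.\ 3.8]{herdegen2022vague} purely for comparison with \cref{theo:micha_helly}, so there is no in-paper argument to match yours against. Judged on its own, your derivation works. For part (a), cancelling the anchor $\dis{\mu_n}(a)$ in the differences and observing that $\dis\mu$, being of bounded variation, has at most countably many discontinuities does give basic convergence with a subsequence-independent exceptional set (and, as you implicitly use, it does not matter whether $a$ itself is a continuity point, since the $\dis{\mu_n}(a)$ terms cancel identically); \cref{theo:micha_helly} then yields $\mu_n\xr{v}\mu$. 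This is exactly the logical relationship the paper describes when it says the hypotheses of (a) are stronger than condition (i) of \cref{theo:micha_helly} --- though of course this route is only available downstream of the main theorem and could not be the original proof in \cite{herdegen2022vague}. For part (b), the trapezoidal approximation of $\ind_{(a,x]}$, with ramp widths $\delta$ chosen small enough that the \emph{no mass} condition at the continuity points $a$ and $x$ controls $\limsup_{n\toinf}\abs{\mu_n}\brc{(a,a+\delta)\cup(x,x+\delta)}$ and continuity from above of the finite measure $\abs{\mu}$ controls the corresponding term for the limit, correctly yields $\limsup_{n\toinf}\abs{\mu_n((a,x])-\mu((a,x])}\leq 3\eps$, and the case $x<a$ is indeed symmetric. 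No gaps.
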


    In contrast to \cref{theo:herdegen}, which considers pointwise convergence of distribution functions (up to constants), our result considers basic convergence, almost basic convergence or local convergence in Lebesgue measure up to constants of the sequence of distribution functions. Thereby we loose the requirement of the sequence having no mass and reach a proper if-and-only-if-statement. In comparison to \cref{theo:micha_helly}, part (a) in \cref{theo:herdegen} has the stronger assumption of pointwise convergence at the continuity points of $\dis\mu$ instead of local convergence in Lebesgue measure, resp. almost basic convergence, and achieves vague convergence as well. Part (b), on the other hand, achieves pointwise convergence instead of convergence in Lebesgue measure, but needs the additional assumption of the sequence having no mass at the continuity points of $\dis\mu$.
     In the following, we will see an example for a sequence of signed measures, which violates the condition of the sequence having no mass everywhere, but converges vaguely.

\begin{example}
\label{ex:mass_everywhere}
    The sequence of signed measures which we considered in \cref{ex:no_mass_vague} violates the condition of having no mass at $x$ for all $x\in[0,1)$. By a slight modification, we get a sequence which even violates this condition for all $x\in\R$.
    For all $n\in\N$, let $k_n\in\N_0$ be such that $2^{k_n}\leq n<2^{k_n+1}$. Define the sequence $\folge{\mu}{n}$ of signed measures  by
    
    $$\mu_n=\brc{\delta_{a_n}-\delta_{b_n}},\quad \text{ where } a_n=k_n{\frac{n-2^{k_n}}{2^{k_n}}}\text{ and } b_n=k_n\frac{n+1-2^{k_n}}{2^{k_n}}.$$
    
    This sequence is similar to the sequence presented in \cref{ex:no_mass_vague}. Its distribution functions do not converge pointwise at any point $x\in\R$, since one can easily see that the sequence $(\dis{\mu_n}(x))_{n\in\N}$ accepts both values $0$ and $1$ infinitely many times for all $x\in\R$. Moreover, for any $x\in\R$ the condition of $\folge{\mu}{n}$ having no mass at $x$ is violated, as $\limsup_{n\toinf}\abs{\mu_n}((x,y))=2$ for all $x,y\in\R, x\leq y$. \\ However, the sequence converges to $0$ vaguely (but not weakly), basically and in Lebesgue measure. 
\end{example}

\begin{remark}
    \begin{enumerate}[label=(\alph*)]
        \item The conditions i) and ii) in \cref{theo:micha_helly} are independent from each other. In \cref{ex:bas_unb_komp} above we found a sequence of measures which converges basically, but is not locally bounded in variation and hence does not converge vaguely.\\
        Conversely, the sequence 
        $$\mu_n=\begin{cases}
            \delta_0,&n \text{ is even}\\
            \delta_1, &n \text{ is odd}
        \end{cases}$$
        is uniformly bounded in variation, but does not converge basically, since there exist subsequences with different pointwise limits.
        \item As seen in \cref{ex:bas_unb}, uniform boundedness on all compact intervals does not imply uniform boundedness on $\R$. We will see below that using the stronger assumption of uniform boundedness on $\R$ in \cref{theo:micha_helly} we get \vag convergence instead of vague convergence.
        \item For non-negative measures, the negative parts are trivially uniformly bounded in variation since they are zero. Hence, for them basic convergence and almost basic convergence are equivalent to vague convergence.
        \item Since vague convergence, local convergence in Lebesgue measure up to constants and local boundedness in variation are local conditions, one can easily generalize \cref{theo:micha_helly} also for sigma-finite signed measures or even more general for suitable mappings from $\bigcup_{k\in\N}B([-k,k])$ to $\R$, using \emph{distribution functions centered at $a$} for some $a\in\R$ instead of classical distribution functions.
    \end{enumerate}
\end{remark}

A characterization similar to \cref{theo:micha_helly} holds for \vag convergence. 
\begin{corollary}
\label{lemma:micha_c0_cc}
  Let $\folge{\mu}{n},\mu$ be finite signed measures. Then $\mu_n\cv\mu$ if and only if ${\mu_n}$ converges to $\mu$ either basically or almost basically and $\folge{\mu}{n}$ is uniformly bounded in variation, i.e. $\sup_{n\in\N}\abs{\mu_n}(\R)<\infty$. Moreover, the \\
  \proof
  If $\folge{\mu}{n}$ is uniformly bounded in variation and almost basically convergent to $\mu$ then $\intr fd\mu_n\ra\intr fd\mu$ for $n\toinf$ for all $f\in C_0(\R)$, since we can approximate functions in $C_0(\R)$ uniformly with functions in $C_c(\R)$ and $\mu_n\xr{v}\mu$ by \cref{theo:micha_helly}.\\
  Conversely, since \vag convergence implies vague convergence, $\mu_n\cv\mu$ also implies $\dis{\mu_n}\Ra\dis\mu$ as $n\toinf$ by \cref{theo:micha_helly}. \\
  Let $\mu_n\xr{v}\mu$. Recall that the space of finite signed measures is a Banach space with relation to the total variation norm, and that $C_0(\R)$ is a Banach space with relation to the supremum norm. We can interpret a finite signed measure $\mu$ as an element of the dual space of $C_0(\R)$ by identifying it with the mapping $f\mapsto\intr fd\mu$. Then the operator norm of $\mu$ is equivalent to the total variation of $\mu$. \\
  If now $\mu_n$ converges vaguely to $\mu$, then $\intr fd\mu_n$ converges to $\intr fd\mu$ for all $f\in C_0(\R)$ and hence $\{\mu_n\}$ as a set of linear operators is pointwise uniformly bounded. By the uniform boundedness principle it then is also bounded in operator norm, i.e. in total variation.
  \qed
\end{corollary}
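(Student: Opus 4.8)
The plan is to lean entirely on \cref{theo:micha_helly} and to exploit that loose convergence, being tested against $C_0(\R)$, sits strictly between vague convergence (tested against $C_c(\R)$) and weak convergence (tested against $C_b(\R)$). Since $C_c(\R)\subset C_0(\R)$, loose convergence trivially implies vague convergence, so in the ``only if'' direction the basic (hence almost basic) convergence of $\folge{\mu}{n}$ to $\mu$ is immediate from \cref{theo:micha_helly}. The genuine content of the corollary is therefore that the \emph{local} uniform variation bound appearing in \cref{theo:micha_helly} gets upgraded to a \emph{global} one exactly when one strengthens the test class from $C_c(\R)$ to $C_0(\R)$.

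For the ``if'' direction, suppose $\folge{\mu}{n}$ is uniformly bounded in variation, say $\sup_n\abs{\mu_n}(\R)=M<\infty$, and converges to $\mu$ at least almost basically. Global boundedness certainly gives local boundedness, so \cref{theo:micha_helly} already yields $\mu_n\xr{v}\mu$. To pass from $C_c(\R)$ to $C_0(\R)$ I would run a standard $\varepsilon/3$ approximation: given $f\in C_0(\R)$ and $\varepsilon>0$, choose $g\in C_c(\R)$ with $\norm{f-g}_\infty<\varepsilon$, split $\intr f\,d\mu_n-\intr f\,d\mu$ into the three differences involving $g$, bound the two tail terms by $\varepsilon M$ and $\varepsilon\abs{\mu}(\R)$ using the global variation bound, and let the middle term vanish by vague convergence. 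Since $\varepsilon$ is arbitrary, $\mu_n\cv\mu$.

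For the ``only if'' direction, loose convergence implies vague convergence and hence, via \cref{theo:micha_helly}, basic convergence; it remains only to produce the global variation bound. Here I would invoke the uniform boundedness principle. The idea is to regard $C_0(\R)$ as a Banach space under $\norm{\cdot}_\infty$ and to identify each finite signed measure $\nu$ with the bounded functional $f\mapsto\intr f\,d\nu$ on $C_0(\R)$, whose operator norm equals the total variation $\abs{\nu}(\R)$ by the Riesz representation theorem. Loose convergence says precisely that the functionals associated to the $\mu_n$ converge pointwise on $C_0(\R)$, so in particular $\sup_n\abs{\intr f\,d\mu_n}<\infty$ for every fixed $f$; Banach--Steinhaus then forces $\sup_n\abs{\mu_n}(\R)=\sup_n\norm{\mu_n}_{op}<\infty$.

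The main obstacle, and the conceptual heart of the statement, is this last step. The uniform boundedness principle requires a complete domain, which is why it applies to $C_0(\R)$ but not to $C_c(\R)$, the latter being dense but not closed in $C_0(\R)$. This is exactly what explains the gap between \cref{theo:micha_helly} and the present corollary: vague convergence only controls the variation locally because $C_c(\R)$ fails to be a Banach space, whereas loose convergence controls it globally. I would therefore be careful to justify the identification of the finite signed measures with the dual $C_0(\R)^*$ and the equality of operator norm and total variation norm, since this is precisely the mechanism that converts a pointwise bounded family of functionals into a family uniformly bounded in variation.
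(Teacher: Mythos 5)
Your proposal is correct and follows essentially the same route as the paper: the ``if'' direction reduces to \cref{theo:micha_helly} plus uniform approximation of $C_0(\R)$ functions by $C_c(\R)$ functions using the global variation bound, and the ``only if'' direction obtains basic convergence from vague convergence via \cref{theo:micha_helly} and the global variation bound from the uniform boundedness principle applied to the measures viewed as functionals on the Banach space $C_0(\R)$ with operator norm equal to total variation. Your explicit $\varepsilon/3$ bookkeeping and the remark on why Banach--Steinhaus needs the completeness of $C_0(\R)$ rather than $C_c(\R)$ are just more detailed renderings of the same argument.
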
 
Observe that Ambrosio et al.\cite[Def 1.58]{ambrosio2000functions} also provides a relation between \vag convergence and vague convergence together with uniform boundedness in variation.

\begin{remark}
    In \cref{lemma:micha_c0_cc}, it is not enough to have uniform boundednes in variation of either the positive part or the negative parts of the sequence of signed measures. The sequence considered in \cref{ex:bas_unb_komp} is basically convergent and not uniformly bounded in variation, but the corresponding sequence of negative parts of measures is uniformly bounded in variation.
\end{remark}

\cref{theo:micha_helly} and \cref{lemma:micha_c0_cc} also yield:

\begin{corollary}
\label{cor:vague_nonmet}
Vague and \vag convergence are not metrizable on the space of all finite signed measures on $\R$.
\end{corollary}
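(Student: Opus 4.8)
The plan is to reuse, essentially verbatim, the diagonal construction from the proof of \cref{theo:weak_notmet}, replacing the role played there by uniform boundedness in variation (the necessary condition for weak convergence coming from \cref{theo:bogachev_basic}) with the corresponding necessary conditions for vague and \vag convergence that are now available from \cref{theo:micha_helly} and \cref{lemma:micha_c0_cc}. To disprove metrizability of a given convergence it suffices to exhibit a single sequence that converges in the putative metric but fails to converge in the sense in question, so it is enough to work with the limit $0$.

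The one preliminary observation I would record is that for every fixed $m\in\N$ the sequence $\folge{m(\delta_0-\delta_{1/n})}{n}$ converges to the zero measure both vaguely and \vagly as $n\toinf$. This is immediate from continuity at $0$: for any $f\in C_0(\R)$, and in particular for any $f\in C_c(\R)$, we have $\intr f\,d\bigl(m(\delta_0-\delta_{1/n})\bigr)=m\bigl(f(0)-f(1/n)\bigr)\ra 0$. Granting this, suppose for contradiction that vague convergence were metrized by a metric $d$. Exactly as in \cref{theo:weak_notmet}, for each $m$ I would choose $N_m$ with $d\bigl(m(\delta_0-\delta_{1/N_m}),0\bigr)\leq 1/m$ and set $\mu_m\coloneqq m(\delta_0-\delta_{1/N_m})$, so that $d(\mu_m,0)\ra 0$ and hence, by the assumed metrizability, $\mu_m\xr{v}0$. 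But both atoms of $\mu_m$ lie in $[-1,1]$, so $\abs{\mu_m}([-1,1])=2m$ and the sequence $\folge{\mu}{m}$ is not locally uniformly bounded in variation; by the necessity direction of \cref{theo:micha_helly} it therefore cannot converge vaguely to $0$, a contradiction. The \vag case is handled identically, using the same diagonal sequence: from $\mu_m\cv 0$ the necessity direction of \cref{lemma:micha_c0_cc} would force uniform boundedness in variation on $\R$, contradicting $\norm{\mu_m}=2m\toinf$.

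I do not expect a genuine obstacle here, since the combinatorial heart of the argument is already carried out in \cref{theo:weak_notmet}. The only point requiring care is to match each convergence notion with the correct necessary condition extracted from the earlier results — local uniform boundedness in variation for vague convergence and global uniform boundedness in variation for \vag convergence — and to confirm that the diagonal sequence $\folge{\mu}{m}$ violates precisely that condition, which it plainly does.
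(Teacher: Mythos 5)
Your proposal is correct and follows essentially the same route as the paper: the paper's proof of \cref{cor:vague_nonmet} likewise just re-runs the diagonal argument of \cref{theo:weak_notmet} and invokes the necessity of (local) uniform boundedness in variation from \cref{theo:micha_helly} and \cref{lemma:micha_c0_cc}. Your write-up merely makes explicit the preliminary check that $m(\delta_0-\delta_{1/n})\ra 0$ vaguely and \vagly, which the paper leaves implicit.
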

\prooof
The same argument as for \cref{theo:weak_notmet} can also be applied to vague and \vag convergence, using \cref{theo:micha_helly} and \cref{lemma:micha_c0_cc}, since the sequence $\folge{\mu}{m}$ constructed in \cref{theo:weak_notmet} is not even locally uniformly bounded in variation.

 Regarding metrizability, we can summarize: Loose (resp. vague) convergence is (on the space of finite signed measures) not metrizable. It is however equivalent to (local) uniform boundedness in variation and either basic convergence, which is not metrizable neither (\cref{lemma:basic_not_met}), or almost basic convergence, which is metrizable (\cref{lemma:micha_metric}).

\bibliographystyle{abbrv} 
\bibliography{bibliography}
\end{document}